\providecommand{\U}[1]{\protect\rule{.1in}{.1in}}
\providecommand{\U}[1]{\protect \rule{.1in}{.1in}}
\newtheorem{theorem}{Theorem}[section]
\newtheorem{lemma}[theorem]{Lemma}
\newenvironment{proof}[1][Proof]{\noindent \textbf{#1.} }{\  \rule{0.5em}{0.5em}}
\numberwithin{equation}{section}
\begin{document}

\title{On two Diophantine inequalities over primes (II) }
%  \author{Min Zhang\footnotemark   \vspace*{-5mm} \\
   %  \small Department of Mathematics, China University of Mining and Technology \vspace*{-5mm} \\
  %  \small  Beijing 100083, P. R. China  }

\author{Yuetong Zhao\footnotemark[1] \,\,\,\,\,  \& \,\,\, Jinjiang Li\footnotemark[2]\,\,\,\,\,\,\,  \& \,\,\,
        Min Zhang\footnotemark[3]
                    \vspace*{-4mm} \\
     $\textrm{\small Department of Mathematics, China University of Mining and Technology\footnotemark[1]\,\,\,\footnotemark[2]}$
                    \vspace*{-4mm} \\
     \small  Beijing 100083, P. R. China
                     \vspace*{-4mm}  \\
     $\textrm{\small School of Applied Science, Beijing Information Science and Technology University\footnotemark[3]}$
                    \vspace*{-4mm}  \\
     \small  Beijing 100192, P. R. China  \vspace*{-4mm}  \\}

\footnotetext[2]{Corresponding author. \\
    \quad\,\, \textit{ E-mail addresses}:
     \href{mailto:yuetong.zhao.math@gmail.com}{yuetong.zhao.math@gmail.com} (Y. Zhao),
     \href{mailto:jinjiang.li.math@gmail.com}{jinjiang.li.math@gmail.com} (J. Li),\\
     \qquad \qquad\qquad\qquad\quad\quad \,\!\!
     \href{mailto:min.zhang.math@gmail.com}{min.zhang.math@gmail.com} (M. Zhang).  }

\date{}
\maketitle

{\textbf{Abstract}}: Let $1<c<\frac{26088036}{12301745},c\not=2$ and $N$ be a sufficiently large real number. In this paper, it is proved that, for almost all $R\in (N,2N]$, the Diophantine inequality
\begin{equation*}
    \big|p_1^c+p_2^c+p_3^c-R\big|<\log^{-1}N
\end{equation*}
is solvable in primes $p_1,p_2,p_3$. Moreover, we also prove that the following Diophantine inequality
\begin{equation*}
    \big|p_1^c+p_2^c+p_3^c+p_4^c+p_5^c+p_6^c-N\big|<\log^{-1}N
\end{equation*}
is solvable in prime variables $p_1,p_2,p_3,p_4,p_5,p_6$, which improves the previous result $1<c<\frac{37}{18},c\neq2$.

{\textbf{Keywords}}: Diophantine inequality; prime variables; exponential sum

{\textbf{MR(2020) Subject Classification}}: 11J25, 11P32, 11P55, 11L07, 11L20

\section{Introduction and main result}
Let $k\geqslant1$ be a fixed integer and $N$ a sufficiently large integer. The famous Waring--Goldbach problem is to study the solvability of the following Diophantine equality
\begin{equation}\label{Waring-Goldbach-general}
  N=p_1^k+p_2^k+\cdots+p_r^k
\end{equation}
in prime variables $p_1,p_2,\dots,p_k$. For $k=2$, in 1938, Hua \cite{Hua-1938} proved that the equation (\ref{Waring-Goldbach-general}) is solvable for $r=5$ and sufficiently large integer $N$ satisfying $N\equiv 5\pmod {24}$.

In 1952, Piatetski--Shapiro \cite{Piatetski-Shapiro-1952} studied the following analogue of the Waring--Goldbach problem:
Suppose that $c>1$ is not an integer, $\varepsilon$ is a small positive number, and $N$ is a sufficiently large real number. Denote
by $H(c)$ the smallest natural number $r$ such that the following Diophantine inequality
\begin{equation}\label{Diophantine-inequality-general}
  |p_1^c+p_2^c+\cdots+p_r^c-N|<\varepsilon
\end{equation}
is solvable in primes $p_1,p_2,\dots,p_r$, then it was proved in \cite{Piatetski-Shapiro-1952} that
\begin{equation*}
  \limsup_{c\to+\infty}\frac{H(c)}{c\log c}\leqslant4.
\end{equation*}
In \cite{Piatetski-Shapiro-1952}, Piatetski--Shapiro considered the case $r=5$ in (\ref{Diophantine-inequality-general}) and proved that $H(c)\leqslant5$
for $1<c<3/2$. Later, the upper bound $3/2$ for $H(c)\leqslant5$ was improved successively to
\begin{equation*}
  \frac{14142}{8923}=1.584\cdots, \frac{1+\sqrt{5}}{2}=1.618\cdots, \frac{81}{40}=2.025, \frac{108}{53}=2.037\cdots,2.041,\cdots,\frac{52}{25}
\end{equation*}
by Zhai and Cao \cite{Zhai-Cao-2003}, Garaev \cite{Garaev-2003}, Zhai and Cao \cite{Zhai-Cao-2007}, Shi and Liu \cite{Shi-Liu-2013},
Baker and Weingartner \cite{Baker-Weingartner-2013}, Li and Cai \cite{Li-Cai-2020}, respectively. Especially, the results in \cite{Zhai-Cao-2007,Shi-Liu-2013,Baker-Weingartner-2013,Li-Cai-2020} satisfy $c>2$, which can be regarded as an analogue of Hua's theorem on sums of five squares of primes. By noting the fact that, for $c>2$, the sequence $p^c$ is sparser than the sequence $p^2$, thus the solvability of (\ref{Diophantine-inequality-general}) becomes more difficult when the range
of $c$, which satisfies $c>2$, becomes larger.

From these results and the Goldbach--Vinogradov theorem, it is reasonable to conjecture that if $c$ is
near to $1$, then the Diophantine inequality (\ref{Diophantine-inequality-general}) is solvable for $r=3$. This conjecture was first
established by Tolev \cite{Tolev-PhD-thesis} for $1<c<\frac{27}{26}$. Since then, the range of $c$ was enlarged to
\begin{equation*}
  \frac{15}{14},\quad \frac{13}{12},\quad \frac{11}{10},\quad \frac{237}{214},\quad \frac{61}{55},\quad \frac{10}{9},\quad \frac{43}{36}
\end{equation*}
by Tolev \cite{Tolev-1992}, Cai \cite{Cai-1996}, Cai \cite{Cai-1999} and Kumchev and Nedeva \cite{Kumchev-Nedeva-1998} independently, Cao and Zhai \cite{Cao-Zhai-2002},
Kumchev \cite{Kumchev-1999}, Baker and Weingartner \cite{Baker-Weingartner-2014}, Cai \cite{Cai-2018}, successively and respectively.

Combining Tolev's method and  the techniques of estimates on exponential sums of Fouvry and
Iwaniec \cite{Fouvry-Iwaniec-1989}, in 2003, Zhai and Cao \cite{Zhai-Cao-2003} proved that
$H(c)\leqslant4$ for $1<c<\frac{81}{68}$. Later, the range of $c$ for $H(c)\leqslant4$ was enlarged to $1<c<\frac{97}{81}$ by Mu \cite{Mu-Quanwu-2015}, to $1<c<\frac{6}{5}$ by Zhang and Li \cite{Zhang-Li-2019-JNT}, and to $1<c<\frac{1193}{889}$ by Zhang and Li \cite{Zhang-Li-2018-IJNT-4}, successively and respectively.

In 1999, Laporta \cite{Laporta-1999} studied the corresponding binary problem and proved that for $1<c<\frac{15}{14}$,  the Diophantine inequality
\begin{equation}\label{binary}
|p_1^c+p_2^c-R|<\varepsilon
\end{equation}
is solvable in primes $p_1,p_2$ for almost all $R\in(N,2N]$, where $N$ is a large real number and $\varepsilon=N^{1-\frac{15}{14c}}\log^8N$. Zhai and Cao \cite{Zhai-Cao-2003-2} and Kumchev and Laporta \cite{Kumchev-Laporta-2002} enlarged the range of $c$ to $1<c<\frac{43}{36}$ and $1<c<\frac{6}{5}$ respectively. Recently, the range of $c$ was further improved. Li and Cai \cite{Li-Cai-2020} proved that for $1<c<\frac{59}{44}$ with $c\neq\frac{4}{3}$, (\ref{binary}) is solvable for almost all $R\in(N,2N]$, and also (\ref{Diophantine-inequality-general}) is solvable for $r=4$.

In 2018, Zhang and Li \cite{Zhang-Li-2018} proved that for $1<c<\frac{37}{18},\, c\not=2$, for almost all $R\in(N,2N]$, the Diophantine inequality
\begin{equation*}
|p_1^c+p_2^c+p_3^c-R|<\log^{-1}N
\end{equation*}
is solvable in primes $p_1,p_2,p_3$, where $N$ is a large real number. Also, if $1<c<\frac{37}{18},\, c\not=2$, (\ref{Diophantine-inequality-general}) is solvable for $r=6$ .

 In this paper, we shall continue to improve the previous result in \cite{Zhang-Li-2018} and establish the following two theorems.

\begin{theorem}\label{Theorem-three-almost all}
   Let $1<c<\frac{26088036}{12301745},c\not=2$ and $N$ be a sufficiently large real number. Then for all $R\in (N,2N]\setminus \mathfrak{A}$ with
\begin{equation*}
|\mathfrak{A}|\ll N \exp\left(-\frac{2}{15}\bigg(\frac{1}{c}\log \frac{N}{3}\bigg)^{1/5}\right),
\end{equation*}
the Diophantine inequality
\begin{equation}\label{qu-3}
   \big|p_1^c+p_2^c+p_3^c-R\big|<\log^{-1}N
\end{equation}
is solvable in primes $p_1,p_2,p_3$.
\end{theorem}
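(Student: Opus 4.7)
My plan is to attack \eqref{qu-3} via the Davenport--Heilbronn version of the circle method adapted to almost--all results, in the spirit of Laporta, Kumchev--Laporta, and Zhang--Li. Set $X=(R/3)^{1/c}$ and introduce the exponential sum
\begin{equation*}
S(\alpha)=\sum_{X<p\leqslant 2X}(\log p)\,e(\alpha p^{c}),
\end{equation*}
together with a Fej\'er-type kernel $K(\alpha)$ whose Fourier transform $\widehat{K}$ is non-negative, has mass at $0$, and is supported in $[-\log^{-1}N,\log^{-1}N]$. Then
\begin{equation*}
\mathcal{R}(R)=\int_{-\infty}^{\infty}S(\alpha)^{3}K(\alpha)\,e(-\alpha R)\,d\alpha
\end{equation*}
is a non-negative weighted count of prime triples satisfying \eqref{qu-3}, so it suffices to show $\mathcal{R}(R)>0$ for every $R\in(N,2N]$ outside a set $\mathfrak{A}$ of the claimed size. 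Choose parameters $0<\tau<T_{0}<T_{1}$ (with $T_{1}$ taken so large that the decay of $K$ makes the tail negligible) and dissect $\bR$ into the major arc $\mathfrak{M}=\{|\alpha|\leqslant\tau\}$, the minor arc $\mathfrak{m}=\{\tau<|\alpha|\leqslant T_{1}\}$, and the trivial arc $\mathfrak{t}=\{|\alpha|>T_{1}\}$.

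On $\mathfrak{M}$ I would invoke the prime number theorem to approximate $S(\alpha)$ by $(\log X)^{-1}\int_{X}^{2X}e(\alpha t^{c})\,dt$ and extract a positive main term of the expected size $\gg X^{3-c}\log^{-4}N$, uniformly in $R\in(N,2N]$. On $\mathfrak{t}$ the combination of the trivial bound $|S(\alpha)|\ll X$ with the rapid decay of $K(\alpha)$ gives a bound far below the main term. The crucial work is to show that the minor-arc contribution is likewise smaller than the main term for \emph{almost all} $R$. Rather than bound $\int_{\mathfrak{m}}S(\alpha)^{3}K(\alpha)e(-\alpha R)\,d\alpha$ pointwise, I would work with its second moment in $R$ and apply Parseval's/Bessel's inequality to get
\begin{equation*}
\int_{N}^{2N}\Bigl|\int_{\mathfrak{m}}S(\alpha)^{3}K(\alpha)e(-\alpha R)\,d\alpha\Bigr|^{2}\!dR\ \ll\ \Bigl(\sup_{\alpha\in\mathfrak{m}}|S(\alpha)|\Bigr)^{2}\int_{\mathfrak{m}}|S(\alpha)|^{4}|K(\alpha)|^{2}\,d\alpha.
\end{equation*}
Chebyshev's inequality then converts the resulting estimate into the bound on $|\mathfrak{A}|$ claimed in the theorem, with the explicit $\exp(-c_{0}(\log N)^{1/5})$ savings being inherited from a Vinogradov--Korobov-style estimate for the sup on $\mathfrak{m}$.

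The main obstacle, and the place where the range of $c$ is decided, is therefore the simultaneous control of (i) $\sup_{\alpha\in\mathfrak{m}}|S(\alpha)|$ and (ii) the fourth moment $\int_{\mathfrak{m}}|S(\alpha)|^{4}|K(\alpha)|^{2}\,d\alpha$. For (i) I would apply Heath--Brown's identity to decompose $S(\alpha)$ into Type I and Type II sums over the range $[X,2X]$, and estimate each using $k$-th derivative tests together with the Fouvry--Iwaniec and Robert--Sargos exponent-pair machinery tailored to sums of the form $\sum e(\alpha n^{c})$; the aim is a bound of shape $|S(\alpha)|\ll X\exp\bigl(-c_{1}(\log N)^{1/5}\bigr)$ throughout $\mathfrak{m}$. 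For (ii) the integral is essentially controlled by the number of near-solutions of $|p_{1}^{c}+p_{2}^{c}-p_{3}^{c}-p_{4}^{c}|\ll\tau$ with $p_{i}\sim X$, which one handles via an $L^{4}$ mean-square estimate for $S(\alpha)$ combined with further exponential-sum bounds. The explicit constant $\tfrac{26088036}{12301745}$ will emerge from the delicate optimization of the Type I/Type II threshold in Heath--Brown's identity, the choice of exponent pair in the derivative tests, and the cutoff levels $\tau$ and $T_{1}$; sharpening precisely this optimization over the earlier choice in Zhang--Li \cite{Zhang-Li-2018} is the technical heart of the proof.
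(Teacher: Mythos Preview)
Your overall architecture (Davenport--Heilbronn with a smooth kernel, major/minor/trivial dissection, and an $L^2$-in-$R$ argument for the minor arc) matches the paper's, but there is a genuine gap in your minor-arc treatment that prevents you from reaching the stated range of~$c$.

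First, a point of confusion: on $\mathfrak{m}$ one does \emph{not} obtain $|S(\alpha)|\ll X\exp(-c_1(\log N)^{1/5})$. The Heath--Brown decomposition together with exponent-pair estimates yields only a fixed power saving, namely $S(\alpha)\ll X^{1-\delta+\eta}$ with $\delta=\tfrac{106039}{12301745}$ (this is Lemma~\ref{S(x)-yuqujianguji-5}). The $\exp(-c_0(\log N)^{1/5})$ that appears in $|\mathfrak{A}|$ comes from the \emph{major}-arc approximation $S(x)=I(x)+O(Xe^{-(\log X)^{1/5}})$, i.e.\ from the error term in the prime number theorem, not from the minor arc.

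Second, and more seriously, your crude factorisation
\[
\int_{N}^{2N}\Bigl|\int_{\mathfrak{m}}S^{3}K\,e(-\alpha R)\,d\alpha\Bigr|^{2}dR
\ \ll\ \bigl(\sup_{\mathfrak m}|S|\bigr)^{2}\int_{\mathfrak{m}}|S|^{4}|K|^{2}\,d\alpha
\]
loses too much. With the Robert--Sargos bound one has $\int_{\mathfrak m}|S|^{4}|\Phi|\,dx\ll X^{2+\eta}$ for $c>2$, so your right-hand side is $\ll \varepsilon X^{4-2\delta+\eta}$. Comparing with $\varepsilon^{2}X^{6-c}$ forces $c<2+2\delta=\tfrac{24815568}{12301745}\approx 2.017$, which is weaker even than the earlier bound $\tfrac{37}{18}$ and certainly does not give $\tfrac{26088036}{12301745}$. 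The paper instead controls $\int_{\mathfrak m}|S|^{6}|\Phi|\,dx$ directly by an \emph{iterated} Cauchy--Schwarz argument: one writes $|S|^{k+1}=\sum_{p}(\log p)e(p^{c}x)\,\overline{S}\,|S|^{k-1}$, enlarges the $p$-sum to all integers $n\sim X$, and after Cauchy--Schwarz in $n$ is left with $\int_{\mathfrak m}|S|^{k}|\Phi|\,|\mathcal T(x-y)|\,dx$, where $\mathcal T(u)=\sum_{X<n\leqslant 2X}e(n^{c}u)$ is then estimated by a well-chosen exponent pair. Running this from $k=4$ to $k=5$ to $k=6$ produces the bound $\int_{\mathfrak m}|S|^{6}|\Phi|\,dx\ll \varepsilon^{3/4}X^{\frac{134576922}{24603490}-\frac{3}{4}c+\eta}$, and it is precisely this refinement (not a sharper Type~I/Type~II optimisation alone) that pushes the admissible range out to $c<\tfrac{26088036}{12301745}$.
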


\begin{theorem}\label{Theorem-five-variables-inequality}
   Suppose that $1<c<\frac{26088036}{12301745},c\not=2$, then for any sufficiently large real number $N$, the following Diophantine inequality
\begin{equation}\label{qu-5}
   \big|p_1^c+p_2^c+p_3^c+p_4^c+p_5^c+p_6^c-N\big|<\log^{-1}N
\end{equation}
is solvable in primes $p_1,p_2,p_3,p_4,p_5,p_6$.
\end{theorem}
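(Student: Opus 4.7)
The plan is to derive Theorem \ref{Theorem-five-variables-inequality} from Theorem \ref{Theorem-three-almost all} by fixing three of the six variables. Introduce a parameter $Y$ of size $\asymp N^{1/c}$ with a small implicit constant, say $Y=(N/50)^{1/c}$, so that for any primes $p_4,p_5,p_6\in(Y,2Y]$ the quantity $R:=N-p_4^c-p_5^c-p_6^c$ lies in some sub-interval of an $(M,2M]$ with $M\asymp N$. Apply Theorem \ref{Theorem-three-almost all} with parameter $M$: whenever $R\notin\mathfrak{A}$ one obtains primes $p_1,p_2,p_3$ with $|p_1^c+p_2^c+p_3^c-R|<\log^{-1}M$, and adding $p_4^c+p_5^c+p_6^c$ back gives the desired six-prime inequality. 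The cosmetic gap between $\log^{-1}M$ and $\log^{-1}N$ is absorbed by running Theorem \ref{Theorem-three-almost all} with a constant-factor tighter error (for which the same proof goes through unchanged).

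It therefore suffices to produce a single triple $(p_4,p_5,p_6)$ of primes in $(Y,2Y]$ for which $R\notin\mathfrak{A}$. I would bound the size of the ``bad'' set
\begin{equation*}
\mathfrak{B}=\bigl\{(p_4,p_5,p_6)\in(Y,2Y]^3\ \text{primes}\colon\,N-p_4^c-p_5^c-p_6^c\in\mathfrak{A}\bigr\}
\end{equation*}
by an auxiliary Davenport--Heilbronn analysis with $F(\alpha)=\sum_{Y<p\leqslant 2Y}(\log p)e(\alpha p^c)$: smoothing the indicator $\chi_{\mathfrak{A}}$ by a unit bump and using the trivial $L^\infty$ bound on its Fourier transform, one splits the resulting integral representation of $|\mathfrak{B}|$ into major, minor and trivial arcs, the major-arc contribution giving the expected density estimate
\begin{equation*}
|\mathfrak{B}|\ll\frac{Y^{3-c}|\mathfrak{A}|}{(\log Y)^3}.
\end{equation*}
Since $Y^c\asymp N$ and Theorem \ref{Theorem-three-almost all} supplies $|\mathfrak{A}|\ll N\exp\bigl(-\tfrac{2}{15}(\tfrac{1}{c}\log(N/3))^{1/5}\bigr)$, the ratio of $|\mathfrak{B}|$ to the total count $\gg Y^3/(\log Y)^3$ is $\ll\exp(-C(\log N)^{1/5})$, which tends to zero far faster than any power of $\log N$; hence good triples abound.

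The main obstacle is verifying the auxiliary circle-method estimate for $|\mathfrak{B}|$, specifically the minor- and trivial-arc contributions: because $\mathfrak{A}$ is furnished only through its measure bound, its Fourier transform admits no pointwise estimate better than $|\mathfrak{A}|$, so all savings outside the major arc must come from $F(\alpha)^3$ itself. This forces recycling the Weyl-type bounds for $\sum_{p\sim Y}(\log p)e(\alpha p^c)$ already developed for Theorem \ref{Theorem-three-almost all}, built on Vaughan- or Heath--Brown-type identities combined with van der Corput differencing and exponent-pair estimates in the Fouvry--Iwaniec spirit. Fortunately the margin in the density estimate above is so large that even a rather crude minor-arc saving suffices; consequently, once the range $1<c<26088036/12301745$ has been secured by the three-prime argument, the six-prime conclusion of Theorem \ref{Theorem-five-variables-inequality} follows with room to spare and no further restriction on $c$.
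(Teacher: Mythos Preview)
Your reduction contains a genuine gap in the minor--arc step for $|\mathfrak{B}|$. The margin you appeal to is only sub--polynomial: Theorem~\ref{Theorem-three-almost all} gives $|\mathfrak{A}|\ll N\exp\bigl(-C(\log N)^{1/5}\bigr)$, so $|\mathfrak{A}|$ is of order $Y^c$ up to a factor smaller than any power of $Y$. With only the trivial bound $|\widehat{\chi_{\mathfrak{A}}}(x)|\leqslant|\mathfrak{A}|$ on $\tau<|x|<K$, you would need
\[
\int_{\tau<|x|<K}|F(x)|^3\,\mathrm{d}x=o\!\left(Y^{3-c}\exp\bigl(C(\log Y)^{1/5}\bigr)\right).
\]
But the diagonal already forces $\int_{\tau<|x|<K}|F(x)|^2\,\mathrm{d}x\gg KY$, and by H\"older
\[
\int_{\tau<|x|<K}|F(x)|^3\,\mathrm{d}x\;\geqslant\;\frac{\bigl(\int_{\tau<|x|<K}|F|^2\bigr)^{3/2}}{K^{1/2}}\;\gg\;Y^{3/2}.
\]
For every $c>3/2$ one has $Y^{3/2}\gg Y^{3-c}$, and a sub--polynomial factor cannot absorb a polynomial deficit. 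So the argument fails precisely throughout the range $2<c<\tfrac{26088036}{12301745}$ where the theorem has new content; your final sentence (``even a rather crude minor--arc saving suffices'') is where the error enters.

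The paper does \emph{not} deduce Theorem~\ref{Theorem-five-variables-inequality} from Theorem~\ref{Theorem-three-almost all}. It runs a direct six--variable Davenport--Heilbronn argument whose key input is the sixth--moment bound
\[
\int_{\tau<|x|<K}|S(x)|^6|\Phi(x)|\,\mathrm{d}x\ll \varepsilon^{3/4}X^{\frac{134576922}{24603490}-\frac{3}{4}c+\eta},
\]
obtained by two Cauchy--Schwarz iterations that pull out a factor $\mathcal{T}(x-y)=\sum_{X<n\leqslant 2X}e(n^c(x-y))$ and estimate it by an exponent pair; this single estimate then feeds both theorems (through $D_2(R)$ for Theorem~\ref{Theorem-three-almost all} and through $\mathscr{C}_6^{(2)}(N)$ here). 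Your black--box route could be repaired by replacing the $L^\infty$ bound on $\widehat{\chi_{\mathfrak{A}}}$ with Cauchy--Schwarz and Plancherel ($\int|\widehat{\chi_{\mathfrak{A}}}|^2=|\mathfrak{A}|$), but that lands squarely on the requirement $\int_{\text{minor}}|F|^6=o(Y^{6-c})$, i.e.\ exactly the paper's sixth--moment estimate---so there is no shortcut around its main technical work.
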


\smallskip
\textbf{Remark 1.} In order to compare our result with the previous result, we list the numerical result as follows
\begin{equation*}
   \frac{26088036}{12301745}=2.120677676\cdots;\,\qquad \frac{37}{18}=2.0555555\cdots.
\end{equation*}

\smallskip
\textbf{Remark 2.} As is shown in \cite{Zhang-Li-2018}, it is proved that for $1<c<2$, (\ref{qu-5}) is solvable and (\ref{qu-3}) is solvable for almost all $R\in (N,2N]$. Therefore, in this paper, we only focus on the case $2<c<\frac{26088036}{12301745}$.

\smallskip
\textbf{Notation.}
Throughout this paper, we suppose that $1<c<\frac{26088036}{12301745},c\not=2$. Let $p$, with or without subscripts, always denote a prime number. $\eta$
always denotes an arbitrary small positive constant, which may not be the same at different occurrences; $N$ always denotes a sufficiently large real number.
As usual, we use $\Lambda(n)$ to denote von Mangoldt's function; $e(x)=e^{2\pi i x}$; $f(x)\ll g(x)$ means that $f(x)=O(g(x))$;
$f(x)\asymp g(x)$ means $f(x)\ll g(x)\ll f(x)$. Let $X$ be a parameter satisfying $X\asymp N^{\frac{1}{c}}$, which will be specified later. Also, We define
\begin{align*}
 & \,\,  \varepsilon=\log^{-4}X,\qquad K=\log^{10}X, \qquad \tau=X^{1-c-\eta}, \qquad \mathscr{L}=\log X,
        \qquad E=\exp(-\mathscr{L}^{\frac{1}{5}}),  \\
 & \,\, \mathcal{T}(x)=\sum_{X<n\leqslant2X}e(n^cx),\quad  S(x)=\sum_{X<p\leqslant2X}(\log p)e\big(p^{c}x\big), \quad  I(x)=\int_X^{2X}e(t^cx)\mathrm{d}t.
\end{align*}

\section{Preliminary Lemmas}
In this section, we shall give some preliminary lemmas, which are necessary in the proof of Theorem \ref{Theorem-three-almost all} and Theorem \ref{Theorem-five-variables-inequality}.

\begin{lemma}\label{xiaobei-lemma}
   Let $a,b$ be real numbers, $0<b<a/4,$ and let $r$ be a positive integer. Then there exists a function $\phi(y)$ which is $r$ times
   continuously differentiable and such that
  \begin{equation*}
    \left\{
      \begin{array}{cll}
          \phi(y)=1,    & &  \textrm{if \quad} |y|\leqslant a-b, \\
          0<\phi(y)<1,  & &  \textrm{if \quad} a-b<|y|< a+b, \\
          \phi(y)=0,    & &  \textrm{if \quad} |y|\geqslant a+b,
      \end{array}
    \right.
  \end{equation*}
  and its Fourier transform
   \begin{equation*}
      \Phi(x)=\int_{-\infty}^{+\infty} e(-xy)\phi(y)\mathrm{d}y
   \end{equation*}
   satisfies the inequality
   \begin{equation*}
      \left|\Phi(x)\right|\leqslant\min\left(2a,\frac{1}{\pi|x|},\frac{1}{\pi|x|}\left(\frac{r}{2\pi|x|b}\right)^r\right).
   \end{equation*}
\end{lemma}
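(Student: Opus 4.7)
The plan is to construct $\phi$ as a convolution of a rectangle function with a suitable smooth bump supported in a short interval, and then read off both the pointwise properties and the Fourier transform bounds from the convolution structure. This is the classical Vinogradov/Piatetski--Shapiro smoothing device.

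First, I would let $\chi_a$ denote the characteristic function of $[-a,a]$ and take $\rho$ to be a nonnegative even function supported on $[-b,b]$ with $\int_{\mathbb{R}}\rho=1$, then set
\begin{equation*}
\phi(y) \;=\; (\chi_a * \rho)(y) \;=\; \int_{-b}^{b} \chi_a(y-t)\,\rho(t)\,\mathrm{d}t.
\end{equation*}
By inspection $\phi(y)=1$ when $|y|\leqslant a-b$ (the integrand is identically $\rho(t)$), $\phi(y)=0$ when $|y|\geqslant a+b$ (the integrand is identically $0$), and in the transitional region $0<\phi(y)<1$. Since differentiation commutes with the convolution and falls on $\rho$, the function $\phi$ inherits the smoothness of $\rho$; in particular, if $\rho\in C^{r}$ then $\phi\in C^{r}$.

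Next I would specify $\rho$ so as to simultaneously give $C^{r}$ smoothness and the required Fourier decay. Take $\sigma$ to be the rescaled rectangle $\sigma(y)=(r/(2b))\,\chi_{[-b/r,\,b/r]}(y)$, which has unit mass and Fourier transform $\hat{\sigma}(x)=\sin(2\pi bx/r)/(2\pi bx/r)$. Choose $\rho$ to be an $m$-fold self-convolution of $\sigma$ with $m\geqslant r+2$ (so that $\rho$ is a $B$-spline-type function of class $C^{m-2}\supseteq C^{r}$, still supported on $[-b,b]$ and of unit mass). Since $b<a/4$, the support of $\phi$ lies in $[-(a+b),a+b]$ and $0\leqslant\phi\leqslant 1$, confirming all three pointwise assertions.

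Finally, for the Fourier transform I would use $\widehat{\phi}=\widehat{\chi_a}\cdot\widehat{\rho}$. One has $\widehat{\chi_a}(x)=\sin(2\pi ax)/(\pi x)$, which gives the two trivial bounds $|\widehat{\chi_a}(x)|\leqslant 2a$ (at $x=0$, or from $\int\chi_a=2a$ combined with $\|\widehat{\rho}\|_{\infty}\leqslant 1$, yielding $|\Phi(x)|\leqslant\int\phi=2a$) and $|\widehat{\chi_a}(x)|\leqslant 1/(\pi|x|)$. For the third bound I would use $|\widehat{\rho}(x)|=|\widehat{\sigma}(x)|^{m}\leqslant (r/(2\pi b|x|))^{r}$ (choosing to discard powers if $m>r$), and then multiply. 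Combining the three options yields the min inequality. The main technical point, which is really the only subtle matter, is balancing the number of self-convolutions so as to achieve $C^{r}$ regularity of $\phi$ while keeping the Fourier bound in the exact form $(r/(2\pi|x|b))^{r}$; everything else is routine bookkeeping of convolution identities.
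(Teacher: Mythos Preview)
The paper itself gives no proof of this lemma, only citations to Piatetski--Shapiro and Segal; your construction is precisely the classical one found there, so in spirit you are on the same track. There is, however, a concrete slip in your execution.

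You set $\sigma=(r/(2b))\chi_{[-b/r,\,b/r]}$ and take $\rho=\sigma^{*m}$ with $m\geqslant r+2$, asserting that $\rho$ is ``still supported on $[-b,b]$''. This is false: the $m$-fold self-convolution of a function supported on $[-b/r,b/r]$ has support $[-mb/r,\,mb/r]$, which strictly exceeds $[-b,b]$ once $m>r$. With the enlarged support of $\rho$, the transition region of $\phi=\chi_a*\rho$ becomes $a-mb/r<|y|<a+mb/r$ rather than $a-b<|y|<a+b$, contradicting the stated pointwise description. The resolution in the cited sources is simply to take $m=r$: then $\rho=\sigma^{*r}$ is genuinely supported on $[-b,b]$ and $|\widehat{\rho}(x)|\leqslant (r/(2\pi b|x|))^{r}$ holds exactly, so all three Fourier bounds follow from $\widehat{\phi}=\widehat{\chi_a}\cdot\widehat{\rho}$ as you describe. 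The price is that $\phi=\chi_a*\sigma^{*r}$ is only $C^{r-1}$, with $\phi^{(r)}$ a bounded piecewise--constant function; the phrase ``$r$ times continuously differentiable'' in the standard statement is a mild overstatement, but it is harmless here since the paper only ever invokes the Fourier bounds on $\Phi$, never the smoothness of $\phi$ directly.
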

\begin{proof}
 See Piatetski--Shapiro~\cite{Piatetski-Shapiro-1952} or Segal~\cite{Segal-1933-1}.  $\hfill$
\end{proof}

\begin{lemma}\label{Fouvry-Iwaniec-chafen}
Let $M,Q\geqslant1$ and $z_m$ be complex numbers. Then we have
\begin{equation*}
 \Bigg|\sum_{M<m\leqslant2M}z_m\Bigg|^2\leqslant\bigg(2+\frac{M}{Q}\bigg)\sum_{|q|<Q}\bigg(1-\frac{|q|}{Q}\bigg)
 \sum_{M<m+q,m-q\leqslant2M}z_{m+q}\overline{z_{m-q}}.
\end{equation*}
\end{lemma}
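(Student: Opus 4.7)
The plan is to reduce the claim to the classical Weyl--van der Corput inequality via a parity split. The right-hand side features the cross-correlation $z_{m+q}\overline{z_{m-q}}$, whose index separation $(m+q)-(m-q)=2q$ is always even. Since a direct application of Weyl--van der Corput to the full sum would produce autocorrelations at all lags (not only even ones), it is natural to separate the original sum according to the parity of $m$, so that the relevant shifts become multiples of $2$ automatically.

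Concretely, I would write $S:=\sum_{M<m\leqslant 2M}z_m=S_{e}+S_{o}$, with $S_e$ and $S_o$ collecting the contributions from even and odd $m$ respectively, and use the elementary bound $|S|^2\leqslant 2(|S_e|^2+|S_o|^2)$ to produce the factor $2$ appearing in the coefficient $(2+M/Q)$. For $S_e$, after reindexing $m=2k$ with $k\in(M/2,M]$, I would apply the standard Weyl--van der Corput trick: write $QS_e=\sum_k\sum_{j=0}^{Q-1}z_{2(k+j)}$, then apply Cauchy--Schwarz in $k$ over a support of size at most $M/2+Q$, to obtain
\[|S_e|^2\leqslant\Bigl(1+\frac{M}{2Q}\Bigr)\sum_{|q|<Q}\Bigl(1-\frac{|q|}{Q}\Bigr)\sum_{k}z_{2(k+q)}\overline{z_{2k}}.\]
The substitution $m=2k+q$ then identifies the innermost sum with $\sum_{m\equiv q\,(\mathrm{mod}\,2),\,m\pm q\in(M,2M]}z_{m+q}\overline{z_{m-q}}$, and the parallel estimate for $S_o$ yields the contribution from $m\not\equiv q\pmod 2$ (obtained analogously via $m=2k+q+1$).

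Adding the two bounds, the even- and odd-parity classes of $m$ reassemble into the unrestricted inner sum appearing on the right-hand side of the lemma, while the overall coefficient becomes $2\cdot(1+M/(2Q))=2+M/Q$, matching the claim exactly. The only step that demands genuine care is the index bookkeeping that combines the two parity-restricted correlations into a single sum over $m$; once this is done, the argument is a routine double application of Cauchy--Schwarz and I do not anticipate any substantive obstacle.
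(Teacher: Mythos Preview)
Your argument is correct. The paper itself gives no proof, merely citing Lemma~2 of Fouvry--Iwaniec. A more direct route---and the one the constant $2+M/Q$ most naturally reflects---avoids the parity split: apply the shifting trick to the full sum $S$ with \emph{even} shifts. Extending $z_m$ by zero outside $(M,2M]$, one has $QS=\sum_k\sum_{j=0}^{Q-1}z_{k+2j}$ with $k$-support of length at most $M+2Q$, so a single application of Cauchy--Schwarz delivers the factor $(M+2Q)/Q=2+M/Q$ together with $\sum_{|q|<Q}(Q-|q|)\sum_n z_{n+2q}\overline{z_n}$; the substitution $m=n+q$ then gives $\sum_m z_{m+q}\overline{z_{m-q}}$ directly, with no parity restriction to undo. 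Your route lands at the same endpoint with the same constant---the two Cauchy losses $2$ and $1+M/(2Q)$ multiply to $2+M/Q$---but the parity decomposition and its recombination are an artefact of using unit shifts on $S_e$, $S_o$ rather than even shifts on $S$ itself.
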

\begin{proof}
 See Lemma 2 of Fouvry and Iwaniec \cite{Fouvry-Iwaniec-1989}.  $\hfill$
\end{proof}

\begin{lemma}\label{yijie-exp-pair}
Suppose that $f(x):[a,b]\to\mathbb{R}$ has continuous derivatives of arbitrary order on $[a,b]$, where $1\leqslant a<b\leqslant2a$. Suppose further that
\begin{equation*}
 \big|f^{(j)}(x)\big|\asymp \lambda_1 a^{1-j},\qquad j\geqslant1, \qquad x\in[a,b].
\end{equation*}
Then for any exponential pair $(\kappa,\lambda)$, we have
\begin{equation*}
 \sum_{a<n\leqslant b}e(f(n))\ll \lambda_1^\kappa a^\lambda+\lambda_1^{-1}.
\end{equation*}
\end{lemma}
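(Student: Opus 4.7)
My plan is to establish this lemma by induction on the algorithmic generation of exponent pairs from the trivial pair $(0,1)$ via the $A$-process (Weyl differencing) and the $B$-process (Poisson summation / van der Corput's transformation); this is the standard development of the theory of exponent pairs, going back to Phillips and Rankin (cf.\ Graham--Kolesnik). The base case $(\kappa,\lambda)=(0,1)$ gives the trivial bound $\sum_{a<n\leqslant b}e(f(n))\ll a=\lambda_1^0 a^1$, so the claim holds immediately for this pair.

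For the inductive step, suppose the claim holds for an exponent pair $(\kappa,\lambda)$. The $A$-transformation produces the new pair $\bigl(\tfrac{\kappa}{2\kappa+2},\tfrac{\kappa+\lambda+1}{2\kappa+2}\bigr)$. To verify the bound here I would apply Lemma~\ref{Fouvry-Iwaniec-chafen} with an auxiliary parameter $Q\in[1,a]$, reducing $\bigl|\sum e(f(n))\bigr|^2$ to shifted bilinear sums over $e(f(n+q)-f(n-q))$; each inner sum has first derivative $\asymp \lambda_1 q/a$ in $n$, so the inductive hypothesis applies to each fixed $q$, and optimization in $Q$ yields the estimate for the new pair. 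The $B$-transformation produces the new pair $(\lambda-\tfrac{1}{2},\kappa+\tfrac{1}{2})$; I would verify the corresponding bound by invoking van der Corput's transformation to rewrite the sum as a dual sum indexed by integers $\nu\in[f'(a),f'(b)]$, an interval of length $\asymp \lambda_1$, with each term of the shape $f''(x_\nu)^{-1/2}e(f(x_\nu)-\nu x_\nu+\tfrac{1}{8})$ at the saddle point $x_\nu$ defined by $f'(x_\nu)=\nu$. Applying the inductive hypothesis to this dual sum (whose first derivative in $\nu$ is $\asymp a$ and whose overall weight is of size $\asymp (\lambda_1/a)^{-1/2}$) and simplifying completes this step.

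The main obstacle will be the bookkeeping: one must verify that the derivative conditions $|g^{(j)}(x)|\asymp L\cdot M^{1-j}$ are preserved under each of $A$ and $B$ with the correct new parameters $(L,M)$, and then carefully track all error terms arising from Lemma~\ref{Fouvry-Iwaniec-chafen}, from the endpoint corrections in Poisson summation, and from the saddle-point approximation. These errors collectively produce the extra $\lambda_1^{-1}$ summand in the stated bound; this term is dominant only when $\lambda_1$ is small, and in that regime one can alternatively appeal to the Kusmin--Landau first-derivative test to obtain the same estimate directly, thereby covering the only case that is not handled cleanly by the $A/B$ induction.
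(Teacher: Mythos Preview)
Your sketch is correct; it outlines the standard inductive construction of exponent pairs via the $A$- and $B$-processes, which is precisely the content of (3.3.4) in Graham--Kolesnik that the paper cites as its entire proof. In other words, the paper simply refers to the source containing exactly the argument you have described.
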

\begin{proof}
 See (3.3.4) of Graham and Kolesnik \cite{Graham-Kolesnik-book}.  $\hfill$
\end{proof}

\begin{lemma}\label{Robert-Sagos-lemma}
  Suppose $Y>1,\gamma>0, c>1, c\not\in\mathbb{Z}$. Let $\mathscr{A}(Y;c,\gamma)$ denote the number of solutions of the inequality
 \begin{eqnarray*}
   \big|n_1^c+n_2^c-n_3^c-n_4^c\big|<\gamma,\qquad Y<n_1,n_2,n_3,n_4\leqslant2Y,
 \end{eqnarray*}
 then there holds
 \begin{equation*}
    \mathscr{A}(Y;c,\gamma)\ll(\gamma Y^{4-c}+Y^2)Y^\eta.
 \end{equation*}
\end{lemma}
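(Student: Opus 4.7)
The bound is a standard Robert--Sargos type estimate for the non-polynomial function $x^c$, and I would attack it in three stages.

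\textbf{Diagonal contribution.} First I separate the ``diagonal'' quadruples. If both $n_1=n_3$ and $n_2=n_4$, the inequality is trivially satisfied, and this contributes $O(Y^2)$. If only $n_1=n_3$ holds, the constraint reduces to $|n_2^c-n_4^c|<\gamma$; by the mean value theorem this forces either $n_2=n_4$ (already counted) or $|n_2-n_4|\ll\gamma Y^{1-c}$, yielding $\ll Y^2\cdot\gamma Y^{1-c}=\gamma Y^{3-c}$, which is absorbed into $\gamma Y^{4-c}$. The symmetric case $n_2=n_4$, $n_1\neq n_3$ is analogous.

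\textbf{Off-diagonal reformulation.} In the main case $n_1\neq n_3$ and $n_2\neq n_4$, set $h=n_1-n_3$ and $k=n_4-n_2$, and by symmetry assume $h,k\geqslant1$. Writing $\phi_t(x)=(x+t)^c-x^c$, the inequality rewrites as
\[
|\phi_h(n_3)-\phi_k(n_2)|<\gamma,
\]
where $\phi_t(x)\asymp tY^{c-1}$, $\phi_t'(x)\asymp tY^{c-2}$, and higher derivatives are analogously controlled. Monotonicity of $\phi_k$ in $n_2$ shows that, for fixed $(h,k,n_3)$, the number of valid $n_2$ is $\ll 1+\gamma Y^{2-c}/k$; summing this naively over $h,k,n_3$ however only gives $O(Y^3)$, which is a factor $Y$ too weak.

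\textbf{Sharp off-diagonal count.} To close the gap I would compare $\phi_h(n_3)\asymp hY^{c-1}$ with $\phi_k(n_2)\asymp kY^{c-1}$, observing that $|\phi_h(n_3)-\phi_k(n_2)|<\gamma$ forces $h\asymp k$ unless $\gamma$ itself is enormous. I would therefore dyadically decompose to $H<h\leqslant 2H$ with $K\asymp H$, and on each block apply the Fourier-majorant device: pick a smooth $\chi\geqslant \mathbf{1}_{(-\gamma,\gamma)}$ via Lemma~\ref{xiaobei-lemma}, so that
\[
\mathscr{A}\ll \int_{\mathbb{R}}\widehat{\chi}(t)\,|\mathcal{T}(t)|^4\,dt,
\]
and bound $\mathcal{T}(t)=\sum_{Y<n\leqslant 2Y}e(tn^c)$ by Lemma~\ref{yijie-exp-pair} with an appropriate exponent pair $(\kappa,\lambda)$, obtaining $|\mathcal{T}(t)|\ll(|t|Y^{c-1})^{\kappa}Y^{\lambda}+(|t|Y^{c-1})^{-1}$. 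Splitting the integral at a threshold $T_0$ chosen to balance the two regimes of $\widehat{\chi}$ and summing the dyadic contributions produces the claimed bound $(\gamma Y^{4-c}+Y^2)Y^{\eta}$.

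\textbf{Main obstacle.} The core difficulty is the off-diagonal step: the naive monotonicity count loses a factor of $Y$, and recovering it requires genuinely exploiting the non-polynomial nature of $x^c$ (guaranteed by $c\notin\mathbb{Z}$), so that near-coincidences $\phi_h(n_3)\approx\phi_k(n_2)$ are governed by a strictly non-linear constraint. Optimising the exponent pair $(\kappa,\lambda)$ and the cutoff $T_0$ so that neither the low- nor the high-frequency contribution exceeds $\gamma Y^{4-c}+Y^2$, while absorbing the sum over dyadic blocks $H$ into the $Y^\eta$ factor, is where the real work lies.
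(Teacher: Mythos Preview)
The paper does not prove this lemma at all; it simply cites Theorem~2 of Robert and Sargos (2006). So there is nothing to compare in terms of the paper's own argument --- the paper treats the result as a black box.

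Your proposed proof, however, has a genuine gap in the ``sharp off-diagonal count''. After writing
\[
\mathscr{A}\ll\int_{\mathbb{R}}\widehat{\chi}(t)\,|\mathcal{T}(t)|^{4}\,\mathrm{d}t
\]
and bounding $\mathcal{T}(t)$ pointwise by an exponent pair, you cannot recover the $Y^{2}$ term. On the range $|t|\leqslant Y^{1-c}$ the only available bound is the trivial one $\mathcal{T}(t)\ll Y$, so this segment alone contributes
\[
\gamma\cdot Y^{1-c}\cdot Y^{4}=\gamma Y^{5-c},
\]
which for $\gamma\asymp Y^{c-2}$ (the critical scale where the two target terms balance) is $\asymp Y^{3}$, a full power of $Y$ above the claimed $Y^{2}$. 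No choice of $(\kappa,\lambda)$ on the larger-$|t|$ range repairs this, and your dyadic decomposition in $h,k$ is disconnected from the Fourier identity, which already sums over all $n_{1},\dots,n_{4}$. In short, pointwise fourth-moment bounds via exponent pairs reproduce exactly the $Y^{3}$ barrier you identified in your ``naive'' count and do not beat it.

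What Robert and Sargos actually do is different: they prove a spacing lemma for the values $m^{c}+n^{c}$ via a first/second-derivative argument in two variables, relying on the non-vanishing of a Jacobian-type determinant of the map $(m,n)\mapsto (m^{c}+n^{c},\,m^{c-1}+n^{c-1})$ (or an equivalent formulation). This is where the hypothesis $c\notin\mathbb{Z}$ --- in particular $c\neq 2$ --- is genuinely used, not merely to make exponent pairs applicable. If you want to give a self-contained proof, that determinant argument (or the double large sieve of Bombieri--Iwaniec type) is the missing ingredient; the exponent-pair route alone will not close the gap.
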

\begin{proof}
 See Theorem 2 of Robert and Sargos \cite{Robert-Sargos-2006}.  $\hfill$
\end{proof}

\begin{lemma}\label{Laporta-lemma-1}
We have
\begin{equation*}
A=\max_{R_2\in (N,2N]}\int_N^{2N}\bigg|\int_{\tau<|x|<K}e\big((R_1-R_2)x\big)\mathrm{d}x\bigg|\mathrm{d}R_1\ll \mathscr{L}.
\end{equation*}
\end{lemma}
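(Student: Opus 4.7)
The plan is to evaluate the inner integral in closed form via a standard symmetric-cutoff computation, obtain a sharp pointwise bound depending only on $|R_1-R_2|$, and then split the outer integral at a well-chosen threshold to extract a logarithmic bound.

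First I would compute the inner integral exactly. Writing $t=R_1-R_2$ and pairing the pieces over $(-K,-\tau)$ and $(\tau,K)$, the integrand combines into $2\cos(2\pi tx)$, and one finds
$$J(t):=\int_{\tau<|x|<K}e(tx)\,\mathrm{d}x=\frac{\sin(2\pi tK)-\sin(2\pi t\tau)}{\pi t}\qquad(t\neq 0),$$
with $J(0)=2(K-\tau)$. This immediately yields the two-sided pointwise estimate
$$|J(t)|\leqslant\min\!\left(2K,\;\frac{2}{\pi|t|}\right),$$
the first bound being sharp for $|t|$ small and the second for $|t|$ large.

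Next I would substitute $t=R_1-R_2$. Since $R_1\in(N,2N]$ and $R_2$ is fixed in $(N,2N]$, the variable $t$ ranges over an interval of length $N$ contained in $[-N,N]$. Hence
$$A\leqslant\max_{R_2\in(N,2N]}\int_{-N}^{N}|J(t)|\,\mathrm{d}t.$$
I would then split this integral at $|t|=1/K$, the crossover point where the two bounds coincide. The low-frequency piece contributes at most $2K\cdot(2/K)=4$, while the high-frequency piece contributes at most $(4/\pi)\int_{1/K}^{N}\mathrm{d}t/t=(4/\pi)\log(NK)\ll\mathscr{L}$, using $K=\log^{10}X$ and $N\asymp X^c$. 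Summing the two contributions gives $A\ll\mathscr{L}$, as required.

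The calculation is entirely elementary and presents no real obstacle; the only item requiring a bit of care is the explicit sine formula for $J(t)$, which is precisely what matches the trivial $L^\infty$ bound with the $1/|t|$ decay and allows the two-piece split to produce exactly the advertised logarithmic saving.
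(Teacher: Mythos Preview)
Your argument is correct. The paper does not supply its own proof here but merely cites Lemma~1 of Laporta~\cite{Laporta-1999}; your direct computation of the inner integral and the $|t|\leqslant 1/K$ versus $|t|>1/K$ split is precisely the standard elementary proof underlying that citation.
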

\begin{proof}
 See Laporta \cite{Laporta-1999}, Lemma 1.  $\hfill$
\end{proof}

Let $\Omega_1$ and $\Omega_2$ be measurable subsets of $\mathbb{R}^n$. Let
\begin{equation*}
\lVert f \rVert_j =\bigg(\int_{\Omega_j}|f(y)|^2\mathrm{d}y\bigg)^{\frac{1}{2}}, \qquad \langle f,g \rangle_j = \int_{\Omega_j}f(y)\overline{g(y)}\mathrm{d}y,
\end{equation*}
be the usual norm and inner product in $L^2(\Omega_j,\mathbb{C})(j=1,2)$, respectively.

\begin{lemma}\label{Laporta-lemma-2}
Let $c\in L^2(\Omega_1,\mathbb{C}), \,\xi \in L^2(\Omega_2,\mathbb{C})$, and let $\omega$ be a measurable complex valued function on $\Omega_1\times\Omega_2$ such that
\begin{equation*}
\sup_{x\in\Omega_1}\int_{\Omega_2}|\omega(x,y)|\mathrm{d}y<+\infty, \qquad \sup_{y\in\Omega_2}\int_{\Omega_1}|\omega(x,y)|\mathrm{d}x<+\infty.
\end{equation*}
Then we have
\begin{equation*}
\bigg|\int_{\Omega_1}c(x)\langle \xi ,\omega(x,\cdot)\rangle_2 \mathrm{d}x\bigg|\leqslant \lVert \xi \rVert_2 \lVert c \rVert_1\bigg(\sup_{x'\in\Omega_1}\int_{\Omega_1}\big|\langle\omega(x,\cdot),\omega(x',\cdot)\rangle_2\big|
\mathrm{d}x\bigg)^{\frac{1}{2}}.
\end{equation*}
\end{lemma}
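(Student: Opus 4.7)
The plan is a classical $TT^*$ (or duality) argument together with a Cauchy--Schwarz symmetrization. Write
\[
 I := \int_{\Omega_1}c(x)\langle \xi,\omega(x,\cdot)\rangle_2\,\mathrm{d}x
   = \int_{\Omega_1}\int_{\Omega_2}c(x)\xi(y)\overline{\omega(x,y)}\,\mathrm{d}y\,\mathrm{d}x.
\]
The uniform integrability hypotheses on $\omega$, together with $c\in L^2(\Omega_1)$ and $\xi\in L^2(\Omega_2)$, justify Fubini (via H\"older on each slice). So I may swap the order of integration and set $F(y):=\int_{\Omega_1}c(x)\overline{\omega(x,y)}\,\mathrm{d}x$, giving $I=\int_{\Omega_2}\xi(y)F(y)\,\mathrm{d}y$.

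Next I apply Cauchy--Schwarz in $\Omega_2$ to obtain $|I|\leqslant \lVert\xi\rVert_2\lVert F\rVert_2$. The main task is then to bound $\lVert F\rVert_2$. Expanding the square and exchanging the order of integration (again legitimate from the hypotheses on $\omega$),
\[
 \lVert F\rVert_2^2
   =\int_{\Omega_1}\int_{\Omega_1}c(x)\overline{c(x')}\,\langle\omega(x',\cdot),\omega(x,\cdot)\rangle_2\,\mathrm{d}x\,\mathrm{d}x'.
\]
Taking absolute values inside and using $|c(x)\overline{c(x')}|\leqslant\tfrac12(|c(x)|^2+|c(x')|^2)$ together with the symmetry $|\langle\omega(x,\cdot),\omega(x',\cdot)\rangle_2|=|\langle\omega(x',\cdot),\omega(x,\cdot)\rangle_2|$, both resulting double integrals become equal after relabeling, and hence
\[
 \lVert F\rVert_2^2
   \leqslant \int_{\Omega_1}|c(x)|^2\bigg(\int_{\Omega_1}|\langle\omega(x,\cdot),\omega(x',\cdot)\rangle_2|\,\mathrm{d}x'\bigg)\mathrm{d}x
   \leqslant \lVert c\rVert_1^{\,2}\sup_{x'\in\Omega_1}\int_{\Omega_1}|\langle\omega(x,\cdot),\omega(x',\cdot)\rangle_2|\,\mathrm{d}x.
\]
Combining with the preceding Cauchy--Schwarz gives exactly the asserted bound.

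No real obstacle is expected: the only delicate point is the double application of Fubini, which is why the two $L^1$-type suprema on $\omega$ are assumed. These guarantee that both iterated integrals defining $F$ and $\lVert F\rVert_2^2$ are absolutely convergent. The symmetrization step (the AM--GM on $|c(x)||c(x')|$) is what converts the factor $\lVert c\rVert_1\lVert c\rVert_1$ that a naive bound would yield into a product with a single supremum in $x'$, which is essential for the intended Diophantine application where the autocorrelation integral $\int |\langle\omega(x,\cdot),\omega(x',\cdot)\rangle_2|\,\mathrm{d}x$ will be controlled via a van der Corput / exponential-pair treatment.
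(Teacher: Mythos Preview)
Your argument is correct: Cauchy--Schwarz in $\Omega_2$, expansion of $\lVert F\rVert_2^2$, and the AM--GM symmetrization on $|c(x)\overline{c(x')}|$ is precisely the standard $TT^*$ proof of this large-sieve-type inequality. The paper itself gives no proof but merely cites Laporta~\cite{Laporta-1999}, Lemma~2, whose argument is essentially the one you have written out; so there is nothing further to compare.
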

\begin{proof}
 See Laporta \cite{Laporta-1999}, Lemma 2.  $\hfill$
\end{proof}

\begin{lemma}\label{S(x)-I(x)-fourth-power-lemma}
  For $1<c<3,c\not=2$, we have
 \begin{eqnarray}\label{S(x)-fourth-power}
   \int_{-\tau}^{+\tau}\big|S(x)\big|^4\mathrm{d}x\ll X^{4-c}\log^5X,
 \end{eqnarray}
 \begin{eqnarray}\label{I(x)-fourth-power}
   \int_{-\tau}^{+\tau}\big|I(x)\big|^4\mathrm{d}x\ll X^{4-c}\log^5X.
 \end{eqnarray}
\end{lemma}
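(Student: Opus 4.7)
The plan is to treat both fourth moments by a single strategy: expand the fourth power, perform the $x$-integration explicitly, and then apply the counting bound of Robert--Sargos (Lemma~\ref{Robert-Sagos-lemma}) to the resulting additive-energy sum. The structural input in both cases is the elementary kernel bound $\bigl|\int_{-\tau}^{\tau}e(\Delta x)\,\mathrm{d}x\bigr|=\bigl|\sin(2\pi\tau\Delta)/(\pi\Delta)\bigr|\ll\min(2\tau,|\Delta|^{-1})$, after which the problem reduces to counting quadruples $(n_1,n_2,n_3,n_4)\in(X,2X]^4$ with $|n_1^c+n_2^c-n_3^c-n_4^c|$ small.

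First I would establish (\ref{I(x)-fourth-power}). Opening the fourth power and interchanging the order of integration gives
\begin{equation*}
\int_{-\tau}^{\tau}|I(x)|^{4}\,\mathrm{d}x=\int_{[X,2X]^{4}}\left(\int_{-\tau}^{\tau}e\bigl(\Delta(\mathbf{t})x\bigr)\,\mathrm{d}x\right)\mathrm{d}\mathbf{t},\qquad \Delta(\mathbf{t}):=t_{1}^{c}+t_{2}^{c}-t_{3}^{c}-t_{4}^{c},
\end{equation*}
so the right-hand side is $\ll\int_{[X,2X]^{4}}\min\bigl(2\tau,|\Delta(\mathbf{t})|^{-1}\bigr)\mathrm{d}\mathbf{t}$. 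The continuous analogue of Lemma~\ref{Robert-Sagos-lemma}, bounding the volume of $\{\mathbf{t}\in[X,2X]^{4}:|\Delta(\mathbf{t})|<\gamma\}$ by $(\gamma X^{4-c}+X^{2})X^{\eta}$, combined with a dyadic decomposition of the range of $|\Delta(\mathbf{t})|$, then delivers the estimate: the central block $|\Delta|<\tau^{-1}$ contributes $\ll\tau\cdot(\tau^{-1}X^{4-c}+X^{2})X^{\eta}\ll X^{4-c+\eta}$, and each of the $O(\log X)$ dyadic shells $2^{j}\tau^{-1}\leq|\Delta|<2^{j+1}\tau^{-1}$ contributes no more, since the $2^{-j}$ decay of the kernel is exactly compensated by the $2^{j}$ growth of the count.

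For (\ref{S(x)-fourth-power}) the analogous expansion is
\begin{equation*}
\int_{-\tau}^{\tau}|S(x)|^{4}\,\mathrm{d}x=\sum_{X<p_{1},p_{2},p_{3},p_{4}\leqslant 2X}\prod_{i=1}^{4}(\log p_{i})\cdot\int_{-\tau}^{\tau}e(\Delta x)\,\mathrm{d}x,
\end{equation*}
with $\Delta=p_{1}^{c}+p_{2}^{c}-p_{3}^{c}-p_{4}^{c}$. Bounding $(\log p_{i})\leqslant\log(2X)$ and extending the sum to all integer quadruples in $(X,2X]^{4}$ reduces the task to $\log^{4}(2X)\cdot\sum_{X<n_{1},\ldots,n_{4}\leqslant 2X}\min(2\tau,|\Delta|^{-1})$, which is controlled by precisely the dyadic analysis carried out for $I(x)$, only now with Lemma~\ref{Robert-Sagos-lemma} in place of its continuous analogue. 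The main obstacle is keeping the logarithmic loss small enough: Lemma~\ref{Robert-Sagos-lemma} is formulated with a factor $X^{\eta}$, so to land on the announced $\log^{5}X$ one must either absorb $X^{\eta}$ into $(\log X)^{O(1)}$ (which is enough for the arithmetic application in the sequel) or invoke the sharper logarithmic form of Robert--Sargos's original theorem; the condition $c\ne 2$ is used solely via the requirement $c\notin\mathbb{Z}$ of that lemma.
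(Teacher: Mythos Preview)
Your overall strategy---expand the fourth power, integrate in $x$, reduce to a $\min(\tau,|\Delta|^{-1})$ kernel, and count quadruples dyadically in the size of $\Delta=n_1^c+n_2^c-n_3^c-n_4^c$---is exactly the one the paper follows. The difference lies in how the counting is carried out, and here your choice creates a real gap.

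The paper does \emph{not} use Lemma~\ref{Robert-Sagos-lemma} for this estimate. Instead it fixes $n_1,n_2,n_3$ and counts $n_4$ directly: the condition $|n_1^c+n_2^c-n_3^c-n_4^c|<\gamma$ confines $n_4$ to an interval whose length, by the mean--value theorem, is $\ll\gamma X^{1-c}$, so the number of admissible quadruples is $\ll X^{3}+\gamma X^{4-c}$ with \emph{no} $X^{\eta}$ loss. Dyadic summation over $\gamma$ contributes one further logarithm, and together with the four factors $\log p_i$ this gives the stated $\log^{5}X$. By contrast, Lemma~\ref{Robert-Sagos-lemma} carries an intrinsic $X^{\eta}$, so your route yields only $\int_{-\tau}^{\tau}|S(x)|^{4}\,\mathrm{d}x\ll X^{4-c+\eta}$.

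This discrepancy is not cosmetic. The only place Lemma~\ref{S(x)-I(x)-fourth-power-lemma} is invoked is (\ref{C_5^(1)-H_tau}), where the fourth moment is multiplied by $\varepsilon X\cdot\sup_{|x|\leqslant\tau}|S(x)-I(x)|\ll\varepsilon X^{2}\exp\bigl(-(\log X)^{1/5}\bigr)$ from Lemma~\ref{S(x)=I(x)+jieyu}. A polylogarithmic factor is absorbed by $\exp\bigl(-(\log X)^{1/5}\bigr)$, giving the required $o(\varepsilon X^{6-c})$; but any fixed power $X^{\eta}$ is not, since $X^{\eta}\exp\bigl(-(\log X)^{1/5}\bigr)\to\infty$. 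Thus the claim that the $X^{\eta}$ version ``is enough for the arithmetic application in the sequel'' is false, and one certainly cannot ``absorb $X^{\eta}$ into $(\log X)^{O(1)}$''. The remedy is precisely the elementary fix-three-count-one argument the paper uses; Robert--Sargos is both overkill and too blunt here. The same elementary count handles (\ref{I(x)-fourth-power}) directly, so there is no need to posit an unstated continuous analogue of Lemma~\ref{Robert-Sagos-lemma}.
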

\begin{proof}
 We only prove (\ref{S(x)-fourth-power}), and (\ref{I(x)-fourth-power}) can be proved likewise. It is easy to see that
 \begin{align}\label{S(x)-fourth-upper}
        \int_{-\tau}^{+\tau}|S(x)|^4\mathrm{d}x
    = &\,\,  \sum_{X<p_1,\dots\,p_4\leqslant 2X}(\log p_1)\cdots(\log p_4)\int_{-\tau}^{+\tau} e\big((p_1^c+p_2^c-p_3^c-p_4^c)x\big) \mathrm{d}x
                    \nonumber   \\
  \ll & \,\, \sum_{X<p_1,\dots,p_4\leqslant 2X}(\log p_1)\cdots(\log p_4)\cdot\min\bigg(\tau,\frac{1}{|p_1^c+p_2^c-p_3^c-p_4^c|}\bigg)
                    \nonumber   \\
  \ll & \,\, \mathcal{U}\tau\log^4X+\mathcal{V}\log^4X,
\end{align}
where
\begin{equation*}
   \mathcal{U}=\sum_{\substack{X<n_1,\,n_2,\,n_3,\,n_4\leqslant 2X\\
            |n_1^c+n_2^c-n_3^c-n_4^c|\leqslant 1/\tau}}1\,\,,
      \qquad \mathcal{V}=\sum_{\substack{X<n_1,\,n_2,\,n_3,\,n_4\leqslant 2X\\
            |n_1^c+n_2^c-n_3^c-n_4^c|> 1/\tau}}
            \frac{1}{|n_1^c+n_2^c-n_3^c-n_4^c|}.
\end{equation*}
On one hand, we have
\begin{align*}
    \mathcal{U}  \ll & \,\, \sum_{X<n_1\leqslant 2X} \sum_{X<n_2\leqslant 2X} \sum_{X<n_3\leqslant 2X}
          \sum_{\substack{X<n_4\leqslant 2X \\ (n_1^c+n_2^c-n_3^c-1/\tau)^{1/c}\leqslant n_4\leqslant (n_1^c+n_2^c-n_3^c+1/\tau)^{1/c} \\
                           n_1^c+n_2^c-n_3^c\asymp X^c }} 1   \\
     \ll &  \sum_{\substack{X<n_1,\,n_2,\,n_3\leqslant 2X\\n_1^c+n_2^c-n_3^c\asymp X^c}}
            \left(1+(n_1^c+n_2^c-n_3^c+1/\tau)^{1/c}-(n_1^c+n_2^c-n_3^c-1/\tau)^{1/c}\right),
\end{align*}
and by the mean--value theorem we get
\begin{equation}
   \mathcal{U}\ll X^3+\frac{1}{\tau}X^{4-c}.
\end{equation}
On the other hand, we have $\mathcal{V}\leqslant\sum_{\ell}\mathcal{V_\ell}$, where
\begin{equation}
   \mathcal{V}_\ell=\sum_{\substack{X<n_1,\,n_2,\,n_3,\,n_4\leqslant 2X\\
                    \ell<|n_1^c+n_2^c-n_3^c-n_4^c|\leqslant2\ell}}
                     \frac{1}{|n_1^c+n_2^c-n_3^c-n_4^c|}
\end{equation}
and $\ell$ takes the values $2^k\tau^{-1},\,k=0,1,2,\cdots,$ with $\ell\ll X^c$. Then, we deduce that
\begin{eqnarray*}
  \mathcal{V}_\ell & \ll  & \frac{1}{\ell} \sum_{\substack{X<n_1,\,n_2,\,n_3,\,n_4\leqslant 2X \\
                  (n_1^c+n_2^c-n_3^c+\ell)^{1/c}\leqslant n_4\leqslant (n_1^c+n_2^c-n_3^c+2\ell)^{1/c}\\
                  n_1^c+n_2^c-n_3^c\asymp X^c }}1.   \\
\end{eqnarray*}
For $\ell\geqslant1/\tau$ and $X<n_1,\,n_2,\,n_3\,\leqslant 2X$~with~$n_1^c+n_2^c-n_3^c\asymp X^c$, it is easy to see that
\begin{equation*}
  \big(n_1^c+n_2^c-n_3^c+2\ell\big)^{1/c}-\big(n_1^c+n_2^c-n_3^c+\ell\big)^{1/c}>1.
\end{equation*}
Therefore, by the mean--value theorem, there holds
\begin{equation}\label{V_ell-upper}
  \mathcal{V}_\ell\ll\frac{1}{\ell}\sum_{\substack{X<n_1,\,n_2,\,n_3 \leqslant 2X\\n_1^c+n_2^c-n_3^c\asymp X^c }}
           \left(\big(n_1^c+n_2^c-n_3^c+2\ell\big)^{1/c}-\big(n_1^c+n_2^c-n_3^c+\ell\big)^{1/c}\right)\ll X^{4-c}.
\end{equation}
Thus, the conclusion (\ref{S(x)-fourth-power}) follows from (\ref{S(x)-fourth-upper})--(\ref{V_ell-upper}).     $\hfill$
\end{proof}

\begin{lemma}\label{S(x)=I(x)+jieyu}
   For $1<c<3,\,c\neq2,\,|x|\leqslant\tau,$ we have
  \begin{equation*}
     S(x)=I(x)+O\left(Xe^{-(\log X)^{1/5}}\right).
  \end{equation*}
\end{lemma}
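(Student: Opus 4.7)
The plan is Abel summation combined with the Vinogradov--Korobov form of the prime number theorem. First, I replace $S(x)$ by the von Mangoldt sum $T(x):=\sum_{X<n\leqslant 2X}\Lambda(n)e(n^cx)$; the contribution of higher prime powers is $O(X^{1/2}\log X)$, which is absorbed in $O(XE)$. Abel summation, the splitting $\psi(t)=t+R(t)$ with $R(t):=\psi(t)-t$, and one integration by parts yield
\[
T(x)-I(x)=\bigl[R(t)\,e(t^cx)\bigr]_X^{2X}-2\pi ic\,x\int_X^{2X}R(t)\,t^{c-1}e(t^cx)\,dt.
\]
The Vinogradov--Korobov estimate $|R(t)|\ll t\exp\bigl\{-c_0(\log t)^{3/5}(\log\log t)^{-1/5}\bigr\}$ is much sharper than the desired $E=\exp\{-(\log X)^{1/5}\}$, so the boundary terms are immediately $O(XE)$.

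For the integral, the constraint $|x|\leqslant\tau=X^{1-c-\eta}$ gives $|2\pi ic\,t^{c-1}x|\ll X^{-\eta}$ throughout $[X,2X]$, so a direct triangle-inequality estimate yields only $O\bigl(X^{2-\eta}E_0\bigr)$, where $E_0=\exp\{-c_0(\log X)^{3/5}(\log\log X)^{-1/5}\}$ is the Vinogradov--Korobov error; this exceeds the target $XE$ by the polynomial factor $X^{1-\eta}$, so genuine oscillatory cancellation is required. To extract it, I invoke the truncated explicit formula
\[
\psi(t)=t-\sum_{|\gamma|\leqslant T}\frac{t^\rho}{\rho}+O\!\left(\frac{t\log^2 t}{T}\right),\qquad T=\exp\{(\log X)^{3/5}\},
\]
so that the remainder contributes at most $O(XE)$, while each nontrivial zero $\rho=\beta+i\gamma$ produces an oscillatory integral $\int_X^{2X}t^{\rho+c-1}e(t^cx)\,dt$ whose phase is $2\pi t^cx+\gamma\log t$. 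The Vinogradov--Korobov zero-free region supplies $t^\beta\ll t\exp\{-c'(\log X)^{3/5}(\log\log X)^{-1/3}\}$, and a van der Corput second-derivative estimate on the combined phase, together with the standard bound $\sum_{|\gamma|\leqslant T}|\rho|^{-1}\ll\log^2 T$, sums to the required $O(XE)$.

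The main obstacle is precisely this integral bound. A crude estimate loses the polynomial factor $X^{1-\eta}$, and closing the gap demands simultaneous use of (i) the oscillation of $e(t^cx)$, (ii) the extra twist $t^{i\gamma}$ carried by each nontrivial zero, and (iii) the smallness of $t^\beta$ in the Vinogradov--Korobov zero-free region. Balancing these three sources of cancellation through the explicit formula and van der Corput's method, with $T$ chosen so that the truncation error and the zero-sum are of comparable size $XE$, is the technical heart of the argument.
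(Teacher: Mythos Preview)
Your overall strategy---partial summation, the truncated explicit formula, and oscillatory--integral estimates backed by the Vinogradov--Korobov zero--free region---is indeed Tolev's method, which is all the paper invokes. But there is a genuine gap in the execution: your choice $T=\exp\{(\log X)^{3/5}\}$ is far too small. Once the remainder $O(t\log^2 t/T)$ of the explicit formula is inserted into
\[
2\pi i c\,x\int_X^{2X}[\cdot]\,t^{c-1}e(t^cx)\,dt,
\]
its contribution is
\[
\ll |x|\,\frac{X^{c+1}\log^2 X}{T}\ \leqslant\ \tau\,\frac{X^{c+1}\log^2 X}{T}\ =\ X^{2-\eta}(\log X)^2\exp\bigl(-(\log X)^{3/5}\bigr),
\]
which is \emph{not} $O(XE)$: the sub--polynomial factor $\exp(-(\log X)^{3/5})$ cannot absorb the polynomial loss $X^{1-\eta}$. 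You have re--committed exactly the error you correctly diagnosed for the crude bound. The same defect infects your zero sum: with $|\gamma|\leqslant T$ the second--derivative test gives $|\int|\ll X^{\beta+c/2}|x|^{-1/2}$, and after summing with $\sum|\rho|^{-1}\ll\log^2T$ and $X^\beta\ll X\exp(-c'(\log X)^{3/5}(\log\log X)^{-1/3})$ you obtain at best $X^{3/2-\eta/2}$ times a sub--polynomial factor, again $\gg XE$.

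The repair is to take $T$ a genuine power of $X$ (e.g.\ $T\asymp X$). Then the truncation error becomes $O(X^{1-\eta}\log^2X)=O(XE)$, but now $\log T\asymp\log X$, so the zero--free region only yields the weaker saving $X^\beta\ll X\exp(-c'(\log X)^{1/3}(\log\log X)^{-1/3})$. To close the argument one must extract a \emph{polynomial} gain from each integral $\int_X^{2X}t^{\rho+c-1}e(t^cx)\,dt$ via the first--derivative test---either $\ll X^{\beta+c}/|\gamma|$ when $|\gamma|\gg |x|X^c$, or $\ll X^{\beta}/|x|$ when $|\gamma|\ll|x|X^c$---so that after multiplying by $|x|/|\rho|$ and summing one is left with $\sum_{|\gamma|\leqslant T}X^\beta/|\gamma|\ll X\exp(-c'(\log X)^{1/3}(\log\log X)^{-1/3})\log^2X\ll XE$. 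It is this oscillatory gain that cancels the polynomial loss; the smallness of $t^\beta$ alone never does. Your sketch does not set up this mechanism, and as written the pieces do not fit together.
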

\begin{proof}
    The proof of Lemma \ref{S(x)=I(x)+jieyu} is similar to that of Lemma 14 in  Tolev~\cite{Tolev-1992}.  $\hfill$
\end{proof}

\begin{lemma}\label{5-power-main-low-bound}
   For $1<c<3,c\neq2,$
   we have
     \begin{equation*}
        \int_{-\infty}^{+\infty} I^6(x)\Phi(x)e(-Nx)\mathrm{d}x\gg\varepsilon X^{6-c}.
     \end{equation*}
\end{lemma}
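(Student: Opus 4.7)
The plan is to use Fourier inversion to recast the integral as the volume of a sixfold region, and then bound that volume from below by elementary geometry.

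\smallskip
First, expand $I(x)^6$ as a sixfold integral and swap the order of integration. The function $\Phi$ lies in $L^1(\mathbb{R})$ by the decay estimate of Lemma \ref{xiaobei-lemma} (apply it, say, with $r=2$), so Fubini's theorem gives
\begin{equation*}
\int_{-\infty}^{+\infty}I^6(x)\Phi(x)e(-Nx)\,\mathrm{d}x=\int_{X}^{2X}\!\!\cdots\!\!\int_{X}^{2X}\left[\int_{-\infty}^{+\infty}\Phi(x)e\bigl((t_1^c+\cdots+t_6^c-N)x\bigr)\mathrm{d}x\right]\mathrm{d}t_1\cdots\mathrm{d}t_6.
\end{equation*}
With the Fourier convention used in Lemma \ref{xiaobei-lemma}, inversion gives $\int_{-\infty}^{+\infty}\Phi(x)e(\alpha x)\mathrm{d}x=\phi(\alpha)$, so the expression above equals $\int_X^{2X}\!\!\cdots\!\!\int_X^{2X}\phi(t_1^c+\cdots+t_6^c-N)\,\mathrm{d}t_1\cdots\mathrm{d}t_6$.

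\smallskip
Second, apply Lemma \ref{xiaobei-lemma} with parameters $a$ and $b$ chosen so that $a-b\asymp\varepsilon$. Since $\phi\ge 0$ everywhere and $\phi\equiv 1$ on $|y|\le a-b$, the above integral is bounded below by
\begin{equation*}
\mathrm{vol}\bigl\{(t_1,\dots,t_6)\in[X,2X]^6:\,|t_1^c+\cdots+t_6^c-N|\le a-b\bigr\}.
\end{equation*}

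\smallskip
Third, estimate this volume by integrating over $t_1$ last. The parameter $X$ (satisfying only $X\asymp N^{1/c}$) can be specified so that $N/X^c$ lies strictly in the interior of $[6,6\cdot 2^c]$, which is the range of $(u_1^c+\cdots+u_6^c)$ for $u_i\in[1,2]$. Consequently there exist constants $1<\eta_1<\eta_2<2^c$ (depending only on $c$) and a sub-box $B\subset[X,2X]^5$ of volume $\gg X^5$ such that for every $(t_2,\dots,t_6)\in B$ the quantity $M:=N-t_2^c-\cdots-t_6^c$ belongs to $[\eta_1X^c,\eta_2X^c]$. Fixing such a tuple, the condition $|t_1^c-M|\le a-b$ together with the mean value theorem applied to $t\mapsto t^c$ (whose derivative is $\asymp X^{c-1}$ on the relevant interval) shows that $t_1$ ranges over an interval of length $\gg (a-b)X^{1-c}\asymp\varepsilon X^{1-c}$, entirely contained in $[X,2X]$. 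Integrating over $B$ yields the bound $\gg X^5\cdot\varepsilon X^{1-c}=\varepsilon X^{6-c}$, which is the desired estimate.

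\smallskip
The only mildly delicate point is the specification of $X$ so that $N$ lies in the interior of the image of $\sum t_i^c$ on $[X,2X]^6$; once this is done (and it is automatic since only $X\asymp N^{1/c}$ is required), the remaining geometry is entirely routine.
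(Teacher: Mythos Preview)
Your proof is correct. The paper itself provides no argument here, merely citing Lemma~2.4 of Zhang and Li~\cite{Zhang-Li-2018}; the Fourier-inversion-plus-volume method you give is precisely the standard approach underlying that reference (and the companion three-variable bound $H(R)\gg\varepsilon R^{3/c-1}$ obtained from Tolev~\cite{Tolev-1992}, Lemma~6, which the paper invokes later in Section~3). Regarding the ``mildly delicate'' point you flag: in Section~4, where this lemma is actually applied, the paper fixes $X=\tfrac12(N/5)^{1/c}$, so that $N/X^{c}=5\cdot 2^{c}$ lies strictly in the interior of $[6,\,6\cdot 2^{c}]$ for every $c>1$, and your geometric step goes through as written.
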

\begin{proof}
See Lemma 2.4 of Zhang and Li \cite{Zhang-Li-2018}.   $\hfill$
\end{proof}

\begin{lemma}\label{Graham-Kolesnik-bijiao}
  Suppose that
  \begin{equation*}
       L(Q)=\sum_{i=1}^nA_iQ^{a_i}+\sum_{j=1}^mB_jQ^{-b_j},
  \end{equation*}
  where $A_i,\,B_j,\,a_i\,\textrm{and}\,\,b_j$ are positive. Assume further that~$Q_1\leqslant Q_2$. Then there exists some
  $\mathscr{Q}$ with $\mathscr{Q}\in[Q_1,Q_2]$ such that
   \begin{equation*}
      L(\mathscr{Q}) \ll \sum_{i=1}^{n}A_iQ_1^{a_i}+\sum_{j=1}^{m}B_jQ_2^{-b_j}
                         +\sum_{i=1}^n\sum_{j=1}^{m}\big(A_i^{b_j}B_j^{a_i}\big)^{1/(a_i+b_j)},
   \end{equation*}
   where the implied constant depends only on $n$ and $m$.
\end{lemma}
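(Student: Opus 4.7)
The strategy is to split $L(Q)=L_1(Q)+L_2(Q)$ with $L_1(Q)=\sum_{i=1}^n A_iQ^{a_i}$ and $L_2(Q)=\sum_{j=1}^m B_jQ^{-b_j}$. Since every $a_i,b_j>0$, $L_1$ is strictly increasing and $L_2$ strictly decreasing on $(0,\infty)$, so $L_1-L_2$ is strictly monotone. I would run a three-way dichotomy according to the sign of $L_1-L_2$ at the endpoints of $[Q_1,Q_2]$: either the increasing part already dominates at $Q_1$, or the decreasing part dominates at $Q_2$, or the two pieces cross somewhere in between.

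In the first easy case $L_1(Q_1)\geqslant L_2(Q_1)$ I would simply take $\mathscr{Q}=Q_1$, yielding $L(\mathscr{Q})\leqslant 2L_1(Q_1)=2\sum_i A_iQ_1^{a_i}$, which is absorbed into the first piece of the bound. Dually, if $L_2(Q_2)\geqslant L_1(Q_2)$ I take $\mathscr{Q}=Q_2$ and obtain $L(\mathscr{Q})\leqslant 2L_2(Q_2)=2\sum_j B_jQ_2^{-b_j}$, absorbed into the second piece.

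The remaining case is $L_1(Q_1)<L_2(Q_1)$ and $L_1(Q_2)>L_2(Q_2)$; the intermediate value theorem then furnishes a unique $\mathscr{Q}\in(Q_1,Q_2)$ with $L_1(\mathscr{Q})=L_2(\mathscr{Q})$, so that $L(\mathscr{Q})=2L_1(\mathscr{Q})$. The task reduces to bounding $L_1(\mathscr{Q})$ by the third sum $\sum_{i,j}(A_i^{b_j}B_j^{a_i})^{1/(a_i+b_j)}$. For each fixed $i$, I choose $j=j(i)$ maximising $B_j\mathscr{Q}^{-b_j}$; then
\begin{equation*}
A_i\mathscr{Q}^{a_i}\leqslant L_1(\mathscr{Q})=L_2(\mathscr{Q})\leqslant m\cdot B_{j(i)}\mathscr{Q}^{-b_{j(i)}},
\end{equation*}
which rearranges to $\mathscr{Q}\leqslant m^{1/(a_i+b_{j(i)})}\bigl(B_{j(i)}/A_i\bigr)^{1/(a_i+b_{j(i)})}$. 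Substituting this back into $A_i\mathscr{Q}^{a_i}$ gives $A_i\mathscr{Q}^{a_i}\ll_m (A_i^{b_{j(i)}}B_{j(i)}^{a_i})^{1/(a_i+b_{j(i)})}$, and summation over $i$ closes the case.

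The key, and essentially only, step is the identification of the balance point $Q_{ij}=(B_j/A_i)^{1/(a_i+b_j)}$ solving $A_iQ^{a_i}=B_jQ^{-b_j}=(A_i^{b_j}B_j^{a_i})^{1/(a_i+b_j)}$, together with the pigeonhole argument above showing that at the equilibrium $\mathscr{Q}$ each increasing contribution is controlled, up to a constant depending only on $m$, by the value at the balance point with its dominant decreasing index. The main obstacle to avoid is bookkeeping: one must verify that the implied constants depend only on $n$ and $m$, which is immediate since the arguments lose only a factor $m$ in the pigeonhole and a factor $n$ in the final summation.
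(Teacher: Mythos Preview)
Your argument is correct. The paper itself does not prove this lemma but merely cites Lemma~2.4 of Graham and Kolesnik; the trichotomy you describe (endpoint dominated by $L_1$, endpoint dominated by $L_2$, or an interior balance point located by the intermediate value theorem, followed by the pigeonhole bound $A_i\mathscr{Q}^{a_i}\leqslant m\max_j B_j\mathscr{Q}^{-b_j}$) is exactly the standard proof found in that reference, with implied constant at most $2m$.
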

\begin{proof}
 See Graham and Kolesnik \cite{Graham-Kolesnik-book}, Lemma 2.4.  $\hfill$
\end{proof}

\begin{lemma}\label{Baker-sanjiao-modified}
  Let $\alpha,\beta\in\mathbb{R},\,\alpha\not=0,1,2,\,\beta\not=0,1,2,3,|a_m|\ll1,|b_\ell|\ll1$. For $F\gg ML^2$, we have
 \begin{align*}
   & \,\,\,(ML)^{-\varepsilon}\cdot\sum_{M<m\leqslant2M}\sum_{L<\ell\leqslant2L}a_mb_\ell e\bigg(F\frac{m^\alpha \ell^\beta}{M^\alpha L^\beta}\bigg)
                          \nonumber  \\
   \ll_{\alpha,\beta,\varepsilon} &\,\, M^{\frac{7}{8}}L^{\frac{13}{16}}F^{\frac{1}{16}}+M^{\frac{515}{544}}L^{\frac{243}{272}}F^{\frac{1}{68}}
          +M^{\frac{34}{37}}L^{\frac{31}{37}}F^{\frac{3}{74}} +M^{\frac{363}{400}}L^{\frac{22}{25}}F^{\frac{3}{100}}
                          \nonumber  \\
 &  +M^{\frac{167}{176}}L^{\frac{303}{352}}F^{\frac{9}{352}}+M^{\frac{383}{416}}L^{\frac{23}{26}}F^{\frac{3}{104}}
       +M^{\frac{579}{640}}L^{\frac{37}{40}}F^{\frac{3}{160}}+M^{\frac{2269}{2368}}L^{\frac{33}{37}}F^{\frac{9}{592}}
                          \nonumber  \\
&  +M^{\frac{711}{768}}L^{\frac{181}{192}}F^{\frac{1}{96}}+M^{\frac{93}{104}}L^{\frac{23}{26}}F^{\frac{1}{26}}
       +M^{\frac{8}{9}}L^{\frac{11}{12}}F^{\frac{1}{36}} +M^{\frac{479}{512}}L^{\frac{57}{64}}F^{\frac{3}{128}}
                           \nonumber  \\
&  +M^{\frac{61}{64}}L^{\frac{9}{8}}F^{-\frac{1}{16}} +M^{\frac{431}{448}}L^{\frac{27}{28}}F^{-\frac{1}{112}}
       +M^{\frac{101}{100}}L^{\frac{183}{200}}F^{-\frac{1}{100}} +M^{\frac{467}{512}}L^{\frac{65}{64}}F^{-\frac{1}{128}}
                            \nonumber  \\
&  +M^{\frac{61}{64}}L^{\frac{15}{16}}+M^{\frac{63}{64}}L^{\frac{29}{32}}+ML^{\frac{93}{104}}+M^{\frac{65}{72}}L+M^{\frac{235}{256}}L^{\frac{63}{64}}.
 \end{align*}
\end{lemma}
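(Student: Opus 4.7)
My plan is to attack
\begin{equation*}
S := \sum_{M<m\leqslant 2M}\sum_{L<\ell\leqslant 2L} a_m b_\ell \, e\big(Fm^\alpha\ell^\beta/(M^\alpha L^\beta)\big)
\end{equation*}
by the standard three-step strategy for double exponential sums with product phase: Cauchy--Schwarz to absorb one set of coefficients, Weyl--van der Corput differencing (Lemma~\ref{Fouvry-Iwaniec-chafen}) to ``smooth out'' one variable, and the one-dimensional exponent-pair bound (Lemma~\ref{yijie-exp-pair}) applied to the remaining smooth sum. The shift parameter introduced by the differencing step is then optimized via Lemma~\ref{Graham-Kolesnik-bijiao}. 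Each of the roughly twenty terms in the stated bound arises from running this machine with a specific choice of exponent pair $(\kappa,\lambda)$ from the standard library.

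Concretely, first I would apply Cauchy--Schwarz in $m$ (using $|a_m|\ll 1$) to get
\begin{equation*}
|S|^2 \ll M\sum_{M<m\leqslant 2M}\bigg|\sum_{L<\ell\leqslant 2L} b_\ell\, e\big(F m^\alpha\ell^\beta/(M^\alpha L^\beta)\big)\bigg|^2,
\end{equation*}
and apply Lemma~\ref{Fouvry-Iwaniec-chafen} to the inner modulus-squared with a shift parameter $Q\in[1,L]$ to be chosen. After swapping orders of summation, for each fixed nonzero shift $q$ and each fixed pair $(\ell+q,\ell-q)$ the resulting $m$-sum has a phase whose $j$-th derivative in $m$ is of order $Fq/L\cdot M^{-j}$; the hypotheses $\alpha\neq 0,1,2$ and $\beta\neq 0,1,2,3$ ensure that the relevant derivatives do not vanish identically. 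Lemma~\ref{yijie-exp-pair} with an exponent pair $(\kappa,\lambda)$ then yields
\begin{equation*}
\sum_m e(\cdots)\ll (Fq/L)^\kappa M^\lambda + L/(Fq).
\end{equation*}
Summing over $\ell$ and $0<|q|<Q$, accounting for the $q=0$ diagonal and the prefactor $(2+L/Q)$ from Lemma~\ref{Fouvry-Iwaniec-chafen}, and optimizing $Q$ via Lemma~\ref{Graham-Kolesnik-bijiao} produces, for each pair $(\kappa,\lambda)$, a bound of the shape $(ML)^{\varepsilon}M^{a}L^{b}F^{c}$.

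Running both this argument and its symmetric counterpart --- Cauchy--Schwarz in $\ell$, differencing in $m$, exponent pair in $\ell$ --- through the standard library of exponent pairs obtained from $(0,1)$ by $A$- and $B$-processes (such as $(1/2,1/2)$, $(1/6,2/3)$, $(1/14,11/14)$, $(11/82,57/82)$, $(2/7,4/7)$ and their derivatives) yields all the terms listed. The hypothesis $F\gg ML^2$ ensures that the exponent-pair term $(Fq/L)^\kappa M^\lambda$ dominates the reciprocal $L/(Fq)$ in the relevant ranges, and that the diagonal contribution $M^2L$ from the trivial $q=0$ case is absorbed by the other terms. The main obstacle is not conceptual but purely combinatorial: one must identify which exponent pair produces each of the twenty-odd terms in the statement and carry out the elementary but tedious algebra demanded by Lemma~\ref{Graham-Kolesnik-bijiao} to exhibit the final exponents of $M$, $L$, and $F$. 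Since the underlying philosophy is entirely standard, I expect the proof to be a lengthy but routine verification once the library of pairs has been fixed.
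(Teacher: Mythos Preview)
Your plan misidentifies both the mechanism behind the lemma and the origin of the twenty-one terms. The paper does not run a single Cauchy--Schwarz plus one Weyl--van der Corput step against a library of exponent pairs. Instead it quotes the proof of Theorem~1 in Baker--Weingartner (2013), which is an iterated differencing argument (combined with the double large sieve) that produces a bound for $|S|^{16}$, not $|S|^2$, in terms of a single auxiliary parameter $Q$:
\begin{align*}
(ML)^{-\varepsilon}|S|^{16} \ll\; & M^{14}L^{13}F + M^{14}L^{12}Q^{13/3}F + M^{53/4}L^{12}Q^{28/3}F + M^{53/4}L^{13}Q^{5}F + M^{16}L^{14}Q^{4/3} \\
& + M^{57/4}L^{16}Q + M^{17}L^{18}F^{-1}Q^{-7} + M^{16}L^{16}Q^{-8} + M^{15}L^{16}Q^{-4} + M^{16}L^{15}Q^{-3}.
\end{align*}
The twenty-one terms in the statement then come from a single application of Lemma~\ref{Graham-Kolesnik-bijiao} to this ten-term expression, optimizing $Q$ over $[1,M^{1/4}]$: they are the endpoint contributions and the $n\times m$ cross-terms $(A_i^{b_j}B_j^{a_i})^{1/(a_i+b_j)}$, not outputs of distinct exponent pairs.

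Your scheme, by contrast, would only yield $|S|^2$ bounds of the shape $M^2L^2Q^{-1} + M^{1+\lambda-\kappa}L^{2-\kappa}F^{\kappa}Q^{\kappa} + \cdots$. No choice of standard exponent pair $(\kappa,\lambda)$ will produce exponents such as $515/544$, $2269/2368$, or $F^{9/592}$; these denominators are artefacts of balancing the fractional $Q$-powers $13/3$, $28/3$, $4/3$ that arise only after several rounds of differencing. The assertion that ``each of the roughly twenty terms arises from a specific choice of exponent pair'' is therefore incorrect, and the proposed single-step machine cannot reach the stated inequality. To prove the lemma you need to reproduce (or cite) the Baker--Weingartner iteration up to the $Q$-dependent $16$th-power bound above, and only then invoke Lemma~\ref{Graham-Kolesnik-bijiao}.
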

\begin{proof}
We follow the process of the proof of Theorem 1 of Baker and Weingartner \cite{Baker-Weingartner-2013} step by
step until p. 267 in \cite{Baker-Weingartner-2013}
and get
\begin{align*}
  & \,\,(ML)^{-\varepsilon}\bigg|\sum_{M<m\leqslant2M}\sum_{L<\ell\leqslant2L}a_mb_\ell e\bigg(F\frac{m^\alpha\ell^\beta}{M^\alpha L^\beta}\bigg)\bigg|^{16}
                            \nonumber  \\
  \ll & \,\, M^{14}L^{13}F+M^{14}L^{12}Q^{\frac{13}{3}}F+M^{\frac{53}{4}}L^{12}Q^{\frac{28}{3}}F+M^{\frac{53}{4}}L^{13}Q^5F+M^{16}L^{14}Q^{\frac{4}{3}}
                             \nonumber  \\
   & \,\,+M^{\frac{57}{4}}L^{16}Q+M^{17}L^{18}F^{-1}Q^{-7}+M^{16}L^{16}Q^{-8}+M^{15}L^{16}Q^{-4}+M^{16}L^{15}Q^{-3}.
\end{align*}
Next, according to the arguments on p. 267 of  Baker and Weingartner \cite{Baker-Weingartner-2013}, we use Lemma \ref{Graham-Kolesnik-bijiao} to optimize
$Q$ over $[1,M^{1/4}]$ and deduce that
\begin{align*}
  & \,\,(ML)^{-\varepsilon}\bigg|\sum_{M<m\leqslant2M}\sum_{L<\ell\leqslant2L}a_mb_\ell e\bigg(F\frac{m^\alpha\ell^\beta}{M^\alpha L^\beta}\bigg)\bigg|^{16}
                            \nonumber  \\
  \ll & \,\, M^{14}L^{13}F+M^{\frac{515}{34}}L^{\frac{243}{17}}F^{\frac{4}{17}}+M^{\frac{544}{37}}L^{\frac{496}{37}}F^{\frac{24}{37}}
             +M^{\frac{363}{25}}L^{\frac{352}{25}}F^{\frac{12}{25}}
                            \nonumber  \\
& \,\, +M^{\frac{167}{11}}L^{\frac{303}{22}}F^{\frac{9}{22}}+M^{\frac{383}{26}}L^{\frac{184}{13}}F^{\frac{6}{13}}
             +M^{\frac{579}{40}}L^{\frac{74}{5}}F^{\frac{3}{10}}+M^{\frac{2269}{148}}L^{\frac{528}{37}}F^{\frac{9}{37}}
                            \nonumber  \\
& \,\, +M^{\frac{711}{48}}L^{\frac{181}{12}}F^{\frac{1}{6}}+M^{\frac{186}{13}}L^{\frac{184}{13}}F^{\frac{8}{13}}
             +M^{\frac{128}{9}}L^{\frac{44}{3}}F^{\frac{4}{9}}+M^{\frac{479}{32}}L^{\frac{57}{4}}F^{\frac{3}{8}}
                             \nonumber  \\
& \,\, +M^{\frac{61}{4}}L^{18}F^{-1}+M^{\frac{431}{28}}L^{\frac{108}{7}}F^{-\frac{1}{7}}+M^{\frac{404}{25}}L^{\frac{366}{25}}F^{-\frac{4}{25}}
           +M^{\frac{467}{32}}L^{\frac{65}{4}}F^{-\frac{1}{8}}
                             \nonumber  \\
& \,\, +M^{\frac{61}{4}}L^{15}+M^{\frac{63}{4}}L^{\frac{29}{2}}+M^{16}L^{\frac{186}{13}}+M^{\frac{130}{9}}L^{16}+M^{\frac{235}{16}}L^{\frac{63}{4}},
\end{align*}
which implies the desired result.    $\hfill$
\end{proof}

\begin{lemma}\label{Heath-Brown-exponent-sum-fenjie}
Let $3<U<V<Z<X$ and suppose that $Z-\frac{1}{2}\in\mathbb{N},\,X\gg Z^2U,\,Z\gg U^2,\,V^3\gg X$. Assume further that $f(n)$ is a
complex--valued function such that $|f(n)|\leqslant1$. Then the sum
\begin{equation*}
 \sum_{X<n\leqslant2X}\Lambda(n)f(n)
\end{equation*}
can be decomposed into $O(\log^{10}X)$ sums, each of which either of Type I:
\begin{equation*}
 \sum_{M<m\leqslant2M}a(m)\sum_{L<\ell\leqslant2L}f(m\ell)
\end{equation*}
with $L\gg Z$, where $a(m)\ll m^{\eta},\,ML\asymp X$, or of Type II:
\begin{equation*}
 \sum_{M<m\leqslant2M}a(m)\sum_{L<\ell\leqslant2L}b(\ell)f(m\ell)
\end{equation*}
with $U\ll M\ll V$, where $a(m)\ll m^{\eta},\,b(\ell)\ll \ell^{\eta},\,ML\asymp X$.
\end{lemma}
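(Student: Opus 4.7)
\textbf{Proof proposal for Lemma \ref{Heath-Brown-exponent-sum-fenjie}.}
The plan is to invoke Heath--Brown's identity in its standard form. For any fixed $K\geqslant 1$ and any $n\leqslant 2X$ with $(2X)^{1/K}\leqslant Z$, one has
\begin{equation*}
\Lambda(n)=\sum_{k=1}^{K}(-1)^{k-1}\binom{K}{k}\sum_{\substack{n=m_1\cdots m_k n_1\cdots n_k\\ m_1,\ldots,m_k\leqslant Z}}\mu(m_1)\cdots\mu(m_k)\log n_1.
\end{equation*}
I would take $K=5$, for which the hypothesis $V^3\gg X$, together with $V<Z$, comfortably guarantees $Z^K\geqslant 2X$. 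Inserting this identity into $\sum_{X<n\leqslant 2X}\Lambda(n)f(n)$ gives $O(1)$ multilinear sums in at most $10$ variables whose product lies in $(X,2X]$, with one distinguished variable carrying the factor $\log n_1$ (removable by partial summation at the cost of an extra $\mathscr{L}$).

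Next I would perform a dyadic decomposition of each of the (at most) $10$ variables, writing each range as $(N_i,2N_i]$. This produces $O(\mathscr{L}^{10})$ sums of the shape
\begin{equation*}
\sum_{N_1<n_1\leqslant 2N_1}\cdots\sum_{N_r<n_r\leqslant 2N_r}c_1(n_1)\cdots c_r(n_r)\,f(n_1\cdots n_r),
\end{equation*}
with $r\leqslant 10$, $N_1\cdots N_r\asymp X$, each coefficient $\ll n_i^{\eta}$, and with the additional information that those $c_i$ coming from the $m_j$'s are supported on $N_i\leqslant Z$. The choice $Z-\tfrac{1}{2}\in\mathbb{N}$ just ensures the constraint $m_j\leqslant Z$ respects dyadic endpoints cleanly. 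The remaining task is, in every dyadic configuration, to group the $r$ variables into two blocks so as to produce a valid Type I or Type II sum.

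The combinatorial step is the heart of the argument. Order the sizes $N_1\geqslant N_2\geqslant\cdots\geqslant N_r$. If there is an $N_{i_0}$ coming from an \emph{unrestricted} $n_j$-variable with $N_{i_0}\geqslant Z$, I combine the remaining variables into a single coefficient $a(m)\ll m^{\eta}$ with $M\asymp X/N_{i_0}$ and $L=N_{i_0}\gg Z$, yielding a Type I sum. Otherwise, every $N_i\leqslant Z$. Form the products $P_s=N_1N_2\cdots N_s$, starting with $P_0=1$ and ending with $P_r\asymp X$. Since each step multiplies by at most $Z$, consecutive $P_s$ differ by a factor at most $Z$. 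I then invoke the hypotheses $V^3\gg X$, $Z\gg U^2$, $X\gg Z^2 U$: if no partial product $P_s$ lies in $[U,V]$, then there is an index $s$ with $P_s<U$ and $P_{s+1}>V$, forcing $N_{s+1}>V/U>Z$ (by $V>ZU$, which follows from $V^3\gg X\gg Z^2 U$ and $V<Z$ inspected carefully), contradicting $N_{s+1}\leqslant Z$. Hence some $P_s\in[U,V]$, and grouping the first $s$ and last $r-s$ variables produces a Type II sum with $U\ll M\ll V$, $ML\asymp X$. This covers every configuration, and the total count is $O(\mathscr{L}^{10})$ as required.

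The main obstacle I anticipate is the last step: verifying rigorously that the three conditions $X\gg Z^2U$, $Z\gg U^2$, $V^3\gg X$ are exactly what is needed to guarantee that the greedy partial products cannot jump across the window $[U,V]$. The rest — the identity, the dyadic splitting, and the extraction of the coefficients $a(m)$, $b(\ell)$ via partial summation on the $\log n_1$ factor — is routine.
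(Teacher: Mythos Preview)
The paper does not supply a proof at all: it simply refers to Lemma~3 of Heath--Brown \cite{Heath-Brown-1983}. Your outline---Heath--Brown's identity, dyadic dissection of each of the $2K$ variables, then a combinatorial regrouping into Type~I or Type~II---is exactly the route taken there, so in spirit you are reproducing the cited proof rather than offering an alternative.

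However, your combinatorial step contains a genuine error. You claim that if a partial product jumps from below $U$ to above $V$ then the jump factor exceeds $V/U>Z$, asserting that ``$V>ZU$ follows from $V^3\gg X\gg Z^2U$ and $V<Z$ inspected carefully''. This is false: the hypotheses include $V<Z$ and $U>3$, whence $ZU>3Z>Z>V$, i.e.\ $V<ZU$ always. A single step of size at most $Z$ can therefore leap over the window $[U,V]$, and your greedy argument as written does not close.

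The correct combinatorics is more delicate and uses all three side conditions in an essential way. After sorting $N_1\geqslant N_2\geqslant\cdots\geqslant N_r$, one first observes that $N_3^3\leqslant N_1N_2N_3\leqslant\prod_i N_i\asymp X\ll V^3$, so $N_3\ll V$; hence at most two of the $N_i$ can exceed $V$. One then splits into cases according to how many of $N_1,N_2$ lie in $(V,Z]$. When both do, the complementary product $N_3\cdots N_r\asymp X/(N_1N_2)$ lies in $[X/Z^2,\,X/V^2]\subset[U,V]$ by $X\gg Z^2U$ and $X\ll V^3$. When at most one does, the remaining variables are all $<U$, and a two--stage partial--product argument (first isolating a block with product just above $V$, then passing to a further complement) lands in $[U,V]$; here $Z\gg U^2$ and $X\gg Z^2U$ are both needed to handle the edge cases. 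This is precisely the part you flagged as the main obstacle, and it does require more than the one--line jump bound you gave; for the details you should consult Heath--Brown's original paper.
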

\begin{proof}
 See Lemma 3 of Heath--Brown \cite{Heath-Brown-1983}.  $\hfill$
\end{proof}

In the rest of this paper, we always suppose that $2<c<\frac{26088036}{12301745}$, and set $F=|x|X^c$ for $\tau<|x|<K$. Trivially, we have $X^{1-\eta}\ll F\ll KX^c$.

\begin{lemma}\label{5-variables-Type-I-1}
Suppose that $\tau<|x|<K,\, F\ll ML^2,\, L\gg X^{\frac{12089667}{24603490}},a(m)\ll m^\eta,ML\asymp X$, then we have
\begin{equation*}
 S_I(M,L):=\sum_{M<m\leqslant2M}\sum_{L<\ell\leqslant2L}a(m)e(xm^c\ell^c)\ll X^{\frac{12195706}{12301745}+\eta}.
\end{equation*}
\end{lemma}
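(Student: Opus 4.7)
The plan is to treat $S_I(M,L)$ as an iterated sum whose inner $\ell$-sum is a pure exponential sum (with $m$ entering only as a parameter), and to apply Lemma \ref{yijie-exp-pair}. First I would extract $|a(m)| \ll M^\eta$ by the triangle inequality:
\begin{equation*}
|S_I(M, L)| \ll M^\eta \sum_{M<m\leqslant 2M} \bigg|\sum_{L<\ell\leqslant 2L} e(xm^c\ell^c)\bigg|.
\end{equation*}
For fixed $m \in (M, 2M]$, the phase $f_m(\ell) = xm^c\ell^c$ satisfies $|f_m^{(j)}(\ell)| \asymp |x|m^c\ell^{c-j} \asymp FL^{-j}$ on $(L, 2L]$, using $ML \asymp X$ and $F = |x|X^c$. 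Lemma \ref{yijie-exp-pair} with $\lambda_1 = F/L$ and an arbitrary exponential pair $(\kappa, \lambda)$ then gives, uniformly in $m$,
\begin{equation*}
\sum_{L<\ell\leqslant 2L} e(xm^c\ell^c) \ll F^\kappa L^{\lambda-\kappa} + L/F.
\end{equation*}

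Next I would sum trivially over the $M \asymp X/L$ values of $m$, and absorb the second term using $F \geqslant X^{1-\eta}$ (which follows from $|x| > \tau = X^{1-c-\eta}$), obtaining
\begin{equation*}
S_I(M, L) \ll X^{1+\eta} F^\kappa L^{\lambda-\kappa-1} + X^{2\eta}.
\end{equation*}
The standing hypothesis $F \ll ML^2 \asymp XL$ bounds the first term above by $X^{1+\kappa+\eta}L^{\lambda-1}$ by inserting the worst case $F \asymp XL$. For any exponential pair with $\lambda<1$ (which holds for every nontrivial pair), this quantity is monotonically decreasing in $L$, and so it attains its maximum at the lower threshold $L \gg X^{12089667/24603490}$. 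This produces
\begin{equation*}
S_I(M, L) \ll X^{1 + \kappa - (1-\lambda)\cdot (12089667/24603490) + \eta}.
\end{equation*}

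The main obstacle is to select a specific exponential pair $(\kappa, \lambda)$ for which the exponent above is bounded by the target $12195706/12301745$, i.e.\ which satisfies
\begin{equation*}
(1-\lambda)\cdot \frac{12089667}{24603490} - \kappa \geqslant \frac{106039}{12301745}.
\end{equation*}
Classical pairs obtained from $(0,1)$ via iterated $A$- and $B$-processes -- for instance $(1/14, 11/14)$ or Huxley's $(11/82, 57/82)$ -- comfortably satisfy this constraint, so the result certainly holds; the precise rational number $12195706/12301745$ suggests that the author pins down a particular convex combination (or a concrete sequence of $A$- and $B$-steps) realizing the optimum against the Type II range complement. Verifying that the specific chosen pair yields the stated exponent is essentially a numerical bookkeeping task, after which the bound $S_I(M, L) \ll X^{12195706/12301745 + \eta}$ follows.
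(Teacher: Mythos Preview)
Your proposal is correct and follows essentially the same route as the paper: extract $|a(m)|$, apply Lemma~\ref{yijie-exp-pair} to the inner $\ell$-sum, insert $F\ll ML^2$, and use the lower bound on $L$. The paper in fact commits to the pair $(\kappa,\lambda)=A^2B(0,1)=(\tfrac{1}{14},\tfrac{11}{14})$ that you singled out, obtaining $S_I(M,L)\ll X^{15/14+\eta}L^{-3/14}\ll X^{12195706/12301745+\eta}$; the target exponent is dictated by the Type~II analysis elsewhere and is not tight for this lemma.
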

\begin{proof}
By Lemma \ref{yijie-exp-pair} with the exponential pair $(\kappa,\lambda)=A^2B(0,1)=(\frac{1}{14},\frac{11}{14})$, we deduce that
\begin{align*}
  S_I(M,L) \ll & \,\, X^\eta\sum_{M<m\leqslant2M}\Bigg|\sum_{L<\ell\leqslant2L}e(xm^c\ell^c)\Bigg|
                               \nonumber  \\
    \ll & \,\, X^\eta\sum_{M<m\leqslant2M}\bigg(\big(|x|X^cL^{-1}\big)^\frac{1}{14}L^{\frac{11}{14}}+\frac{1}{|x|X^cL^{-1}}\bigg)
                               \nonumber  \\
     \ll & \,\, X^\eta\Big(M\big(FL^{-1}\big)^{\frac{1}{14}}L^{\frac{11}{14}}+\tau^{-1}X^{1-c}\Big)
                                \nonumber  \\
     \ll & \,\, MF^{\frac{1}{14}}L^{\frac{5}{7}}X^\eta \ll M(ML^2)^{\frac{1}{14}}L^{\frac{5}{7}}X^\eta
                                 \nonumber  \\
     \ll & \,\, M^{\frac{15}{14}}L^{\frac{6}{7}}X^\eta\ll X^{\frac{15}{14}+\eta}L^{-\frac{3}{14}}
                \ll X^{\frac{12195706}{12301745}+\eta},
\end{align*}
which completes the proof of Lemma \ref{5-variables-Type-I-1}.      $\hfill$
\end{proof}

\begin{lemma}\label{5-variables-Type-I-2-1}
Suppose that $\tau<|x|<K,F\gg ML^2, M\ll X^{\frac{12513823}{24603490}}, a(m)\ll m^\eta,ML\asymp X$. Then we have
\begin{equation*}
 S_I(M,L):=\sum_{M<m\leqslant2M}\sum_{L<\ell\leqslant2L}a(m)e(xm^c\ell^c)\ll X^{\frac{12195706}{12301745}+\eta}.
\end{equation*}
\end{lemma}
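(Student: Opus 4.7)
Since the hypothesis $F\gg ML^2$ places us outside the regime where the direct exponential-pair estimate of Lemma~\ref{5-variables-Type-I-1} succeeds—there the bound $M F^{1/14} L^{5/7}$ works only because $F \ll ML^2$ lets us absorb $F^{1/14}$—the natural move is to invoke the bilinear Weyl-differencing bound of Lemma~\ref{Baker-sanjiao-modified}, which is tailored precisely to the regime $F \gg ML^2$.

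We take $\alpha = \beta = c$ (admissible since $2 < c < 3$ and $c \neq 2$, so the parity conditions in Lemma~\ref{Baker-sanjiao-modified} are satisfied), set the normalized coefficients $a_m := a(m)X^{-\eta}$ so that $|a_m| \ll 1$, and take $b_\ell \equiv 1$. Rewriting the phase as $xm^c\ell^c = (|x|M^cL^c)(m/M)^c(\ell/L)^c$ and noting that $ML \asymp X$ gives $|x|M^cL^c \asymp |x|X^c = F$, the ``$F$'' in Lemma~\ref{Baker-sanjiao-modified} agrees with our $F$ up to an absolute constant. The hypothesis $F \gg ML^2$ is exactly what the lemma requires, so we obtain $S_I(M,L) \ll X^{2\eta}\sum_i M^{p_i} L^{q_i} F^{r_i}$, a sum of the roughly twenty monomials listed in Lemma~\ref{Baker-sanjiao-modified}.

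The remaining and main task is to check that each monomial is $\ll X^{12195706/12301745+\eta}$. For every term one substitutes $L \asymp X/M$, writing $M^{p_i}L^{q_i}F^{r_i} \asymp M^{p_i-q_i} X^{q_i} F^{r_i}$, and then inserts $M \ll X^{12513823/24603490}$ from the hypothesis. In the terms with $r_i > 0$ one uses the upper bound $F \ll KX^c \ll X^{c+\eta}$; in the four terms carrying a negative power of $F$, namely those with $F^{-1/16}$, $F^{-1/112}$, $F^{-1/100}$, and $F^{-1/128}$, one instead uses the lower bound $F \gg ML^2 \asymp XL$.

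The main obstacle is the bookkeeping. The threshold $12513823/24603490$ has been engineered so that exactly one (or a few) of the twenty monomials becomes the binding constraint and its value at $M = X^{12513823/24603490}$, $F \asymp X^c$ equals $X^{12195706/12301745}$ on the nose; all remaining terms then have strictly smaller exponents under the same substitutions. The strategy is therefore to first identify the critical term(s), verify that they meet the target bound with equality, and then dispatch the rest by routine exponent arithmetic. No new analytic ingredient beyond Lemma~\ref{Baker-sanjiao-modified} is needed.
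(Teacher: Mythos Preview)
Your plan has a genuine gap: Lemma~\ref{Baker-sanjiao-modified} alone does not cover the full range $1\ll M\ll X^{12513823/24603490}$. Several of its twenty-one monomials carry a higher power of $L$ than of $M$, for instance $M^{65/72}L$, $M^{235/256}L^{63/64}$, $M^{8/9}L^{11/12}F^{1/36}$, and $M^{467/512}L^{65/64}F^{-1/128}$. After substituting $L\asymp X/M$ these become $X^{q_i}M^{p_i-q_i}F^{r_i}$ with $p_i-q_i<0$, so they are \emph{decreasing} in $M$. The hypothesis only gives an \emph{upper} bound on $M$; there is no lower bound, and in the Heath--Brown decomposition Type~I sums can have $M$ as small as $1$. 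At $M\asymp1$ the term $M^{65/72}L$ alone is $\asymp X$, far larger than the target $X^{12195706/12301745}$. Your step ``inserts $M\ll X^{12513823/24603490}$'' therefore goes the wrong way for these terms.

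This is exactly why the paper splits the range. For $M\ll X^{3393655/12301745}$ it estimates the inner $\ell$-sum directly by Lemma~\ref{yijie-exp-pair} with the exponent pair $(\kappa,\lambda)=AB(0,1)=(\tfrac16,\tfrac23)$, obtaining
\[
S_I(M,L)\ll X^{\eta}\Big(M(FL^{-1})^{1/6}L^{2/3}+\tau^{-1}X^{1-c}\Big)\ll M^{1/2}X^{c/6+1/2+\eta},
\]
which is $\ll X^{12195706/12301745+\eta}$ precisely when $M\ll X^{3393655/12301745}$ and $c<\tfrac{26088036}{12301745}$. Only on the complementary range $X^{3393655/12301745}\ll M\ll X^{12513823/24603490}$ does the paper invoke Lemma~\ref{Baker-sanjiao-modified}; there the lower bound on $M$ is available and controls the terms with $p_i<q_i$. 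You need this two-case split (or some equivalent device supplying a lower bound on $M$) for the argument to go through.
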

\begin{proof}
For $M\ll X^{\frac{3393655}{12301745}}$, by Lemma \ref{yijie-exp-pair} with the exponential pair $(\kappa,\lambda)=AB(0,1)=(\frac{1}{6},\frac{2}{3})$,
we derive that
\begin{align*}
  S_I(M,L) \ll & \,\, M^\eta\sum_{M<m\leqslant2M}\Bigg|\sum_{L<\ell\leqslant2L}e(xm^c\ell^c)\Bigg|
                               \nonumber  \\
    \ll & \,\, M^\eta\sum_{M<m\leqslant2M}\bigg(\big(|x|X^cL^{-1}\big)^\frac{1}{6}L^{\frac{2}{3}}+\frac{1}{|x|X^cL^{-1}}\bigg)
                               \nonumber  \\
     \ll & \,\, X^\eta\Big(K^{\frac{1}{6}}X^{\frac{c}{6}}L^{\frac{1}{2}}M+\tau^{-1}X^{1-c}\Big)
                                \nonumber  \\
     \ll & \,\, M^{\frac{1}{2}}X^{\frac{c}{6}+\frac{1}{2}+\eta}\ll X^{\frac{12195706}{12301745}+\eta},
\end{align*}
which completes the proof of Lemma \ref{5-variables-Type-I-2-1}. If
$X^{\frac{3393655}{12301745}}\ll M\ll X^{\frac{12513823}{24603490}}$, it follows from Lemma \ref{Baker-sanjiao-modified} with $(m,\ell)=(m,\ell)$ that $ S_I(M,L)\ll X^{\frac{12195706}{12301745}+\eta}$,
which derives the desired result.  $\hfill$
\end{proof}

\begin{lemma}\label{5-variables-Type-II}
Suppose that  $\tau<|x|<K,X^{\frac{212078}{12301745}}\ll M\ll X^{\frac{28846271}{49206980}}, a(m)\ll m^\eta,b(\ell)\ll \ell^\eta, ML\asymp X$, then we have
\begin{equation*}
 \mathcal{S}_{II}(M,L):=\sum_{M<m\leqslant2M}\sum_{L<\ell\leqslant2L}a(m)b(\ell)e(xm^c\ell^c)\ll X^{\frac{12195706}{12301745}+\eta}.
\end{equation*}
\end{lemma}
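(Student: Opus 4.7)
My plan is to estimate $\mathcal{S}_{II}(M,L)$ by combining Baker and Weingartner's double exponential sum bound, Lemma~\ref{Baker-sanjiao-modified}, with a Cauchy--Schwarz--based fallback for the subrange where its hypothesis fails.

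In the main regime $F\gg ML^2$, I would apply Lemma~\ref{Baker-sanjiao-modified} with $\alpha=\beta=c$ and $F=|x|X^c$.  Since $ML\asymp X$, each of the twenty monomial terms $M^aL^bF^d$ produced by Lemma~\ref{Baker-sanjiao-modified} reduces, after substituting $L=X/M$ and taking $F$ at the extreme value most unfavorable for the sign of $d$ (namely $F\asymp X^{c+\eta}$ for $d>0$ and $F\asymp ML^2$ for $d<0$), to a pure monomial in $M$ and $X$ that must be $\ll X^{12195706/12301745+\eta}$.  The upper endpoint $M\ll X^{28846271/49206980}$ is dictated by the worst term with positive $F$-exponent.

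In the complementary regime---in particular when $F\ll ML^2$ or when $M$ sits near the lower end of the admissible range---I would apply Cauchy--Schwarz in $m$ to peel off $a(m)$:
\begin{equation*}
\big|\mathcal{S}_{II}\big|^2\leqslant M^{1+\eta}\sum_{M<m\leqslant 2M}\bigg|\sum_{L<\ell\leqslant 2L}b(\ell)e(xm^c\ell^c)\bigg|^2,
\end{equation*}
expand the square, and treat the resulting inner one-dimensional $m$-sum attached to each pair $(\ell_1,\ell_2)$ via Lemma~\ref{yijie-exp-pair} with a suitable exponent pair such as $(\kappa,\lambda)=A^2B(0,1)=(1/14,11/14)$; if necessary, Lemma~\ref{Fouvry-Iwaniec-chafen} (Fouvry--Iwaniec differencing with a parameter $Q$) can be inserted first to sharpen the estimate.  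The lower endpoint $M\gg X^{212078/12301745}$ is calibrated so that this alternative bound also meets the target $X^{12195706/12301745+\eta}$.

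The main obstacle will be the extensive bookkeeping required to verify all twenty inequalities arising from Lemma~\ref{Baker-sanjiao-modified} simultaneously over the $M$-range, together with the compatibility of the two techniques at the transition between the subregimes.  Indeed the exponent $\frac{26088036}{12301745}$ in the main theorems is precisely the largest value of $c$ for which the worst of these twenty constraints, combined with the fallback estimate, can be kept below $X^{12195706/12301745+\eta}$ throughout the stated range.
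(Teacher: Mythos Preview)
Your proposal misidentifies the main tool. The paper does \emph{not} use Lemma~\ref{Baker-sanjiao-modified} for the Type~II sum at all; that lemma is reserved for the Type~I estimate (Lemma~\ref{5-variables-Type-I-2-1}). For Type~II the paper uses a single argument throughout the stated $M$-range: apply Cauchy in $\ell$ to strip $b(\ell)$, then Fouvry--Iwaniec differencing (Lemma~\ref{Fouvry-Iwaniec-chafen}) on the inner $m$-sum with parameter $Q=X^{212078/12301745}(\log X)^{-1}$, and finally bound the resulting smooth sum $\sum_\ell e\big(x\ell^c((m+q)^c-(m-q)^c)\big)$ via the exponent pair $A^3B(0,1)=(\tfrac{1}{30},\tfrac{26}{30})$. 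The diagonal contribution $(X^2/Q)^{1/2}$ is exactly $X^{12195706/12301745}$; the lower endpoint $M\gg X^{212078/12301745}$ is precisely the condition $M\gg Q$ making the differencing nontrivial, while the upper endpoint $M\ll X^{28846271/49206980}$ is the inequality $M^4Q^{31}\ll X^{5-c}$ (at $c=\tfrac{26088036}{12301745}$) needed for the exponent-pair term to be dominated by the diagonal. Neither endpoint is produced by Lemma~\ref{Baker-sanjiao-modified}.

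Your primary route via Lemma~\ref{Baker-sanjiao-modified} would in fact fail on part of the range: the paper's own Type~I computation shows that lemma delivers the target only for $X^{3393655/12301745}\ll M\ll X^{12513823/24603490}$, whereas the Type~II range runs from roughly $X^{0.017}$ to $X^{0.586}$. Near the lower end, terms such as $M^{65/72}L\asymp X M^{-7/72}$ already exceed $X^{12195706/12301745}$. Your fallback sketch (Cauchy in $m$, expand in $\ell$, exponent pair $(\tfrac{1}{14},\tfrac{11}{14})$ on the $m$-sum) swaps the roles of the variables relative to the paper, uses a different pair, and treats Lemma~\ref{Fouvry-Iwaniec-chafen} as optional; as written it would not recover the exact constraints that pin down the stated endpoints.
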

\begin{proof}
Taking $Q=X^{\frac{212078}{12301745}}(\log X)^{-1}$,
if $X^{\frac{212078}{12301745}}\ll M\ll X^{\frac{28846271}{49206980}}$, by
Cauchy's inequality and Lemma \ref{Fouvry-Iwaniec-chafen}, we deduce that
\begin{align}\label{Type-II-inner-sum}
        & \,\,  S_{II}(M,L) \ll \Bigg|\sum_{L<\ell\leqslant2L}b(\ell)\sum_{M<m\leqslant2M}a(m)e(xm^c\ell^c)\Bigg|
                     \nonumber  \\
    \ll & \,\, \Bigg(\sum_{L<\ell\leqslant2L}|b(\ell)|^2\Bigg)^{\frac{1}{2}}
               \Bigg(\sum_{L<\ell\leqslant2L}\Bigg|\sum_{M<m\leqslant2M}a(m)e(xm^c\ell^c)\Bigg|^2\Bigg)^{\frac{1}{2}}
                     \nonumber  \\
    \ll & \,\, L^{\frac{1}{2}+\eta} \Bigg(\sum_{L<\ell\leqslant2L}\frac{M}{Q}\sum_{0\leqslant q<Q}\bigg(1-\frac{q}{Q}\bigg)
                     \nonumber  \\
    & \qquad \qquad \times\sum_{M+q<m\leqslant2M-q}a(m+q)\overline{a(m-q)}e\Big(x\ell^c\big((m+q)^c-(m-q)^c\big)\Big)\Bigg)^{\frac{1}{2}}
                      \nonumber  \\
    \ll & \,\, L^{\frac{1}{2}+\eta}\Bigg(\frac{M}{Q}\sum_{L<\ell\leqslant2L}\bigg(M^{1+\eta}+\sum_{1\leqslant q<Q}\bigg(1-\frac{q}{Q}\bigg)
                      \nonumber  \\
    & \qquad \qquad \times\sum_{M+q<m\leqslant2M-q}a(m+q)\overline{a(m-q)}e\Big(x\ell^c\big((m+q)^c-(m-q)^c\big)\Big) \bigg)\Bigg)^{\frac{1}{2}}
                      \nonumber  \\
    \ll & \,\, X^\eta\Bigg(\frac{X^2}{Q}+\frac{X}{Q}\sum_{1\leqslant q<Q}\sum_{M<m\leqslant2M}
               \Bigg|\sum_{L<\ell\leqslant2L}e\Big(x\ell^c\big((m+q)^c-(m-q)^c\big)\Big) \Bigg|\Bigg)^{\frac{1}{2}}.
\end{align}
Therefore, it suffices to give the estimate of the following sum
\begin{equation*}
  \mathscr{S}_0:=\sum_{L<\ell\leqslant2L}e\Big(x\ell^c\big((m+q)^c-(m-q)^c\big)\Big).
\end{equation*}
From Lemma \ref{yijie-exp-pair} with the exponential pair $(\kappa,\lambda)=A^3B(0,1)=(\frac{1}{30},\frac{26}{30})$, we have
\begin{equation*}
   \mathscr{S}_0\ll \big(|x|X^{c-1}q\big)^{\frac{1}{30}}L^{\frac{26}{30}}+\frac{1}{|x|X^{c-1}q}.
\end{equation*}
Inserting the above estimate into (\ref{Type-II-inner-sum}), we derive that
\begin{align*}
             S_{II}(M,L)
  \ll & \,\, X^\eta\Bigg(\frac{X^2}{Q}+\frac{X}{Q}\sum_{1\leqslant q<Q}
            \sum_{M<m\leqslant2M}\bigg(\big(|x|X^{c-1}q\big)^{\frac{1}{30}}L^{\frac{26}{30}}+\frac{1}{|x|X^{c-1}q}\bigg)\Bigg)^{\frac{1}{2}}
                           \nonumber  \\
  \ll & \,\, X^\eta\Bigg(\frac{X^2}{Q}+\frac{X}{Q}\Big(K^\frac{1}{30}X^{\frac{c-1}{30}}L^{\frac{26}{30}}MQ^{\frac{31}{30}}
             +\tau^{-1}X^{1-c}M\log Q\Big)\Bigg)^{\frac{1}{2}}
                             \nonumber  \\
   \ll & \,\, \big(X^{2+\eta}Q^{-1}\big)^{\frac{1}{2}}\ll X^{\frac{12195706}{12301745}+\eta},
\end{align*}
which completes the proof of Lemma \ref{5-variables-Type-II}.   $\hfill$
\end{proof}

\begin{lemma}\label{S(x)-yuqujianguji-5}
Suppose that $2<c<\frac{26088036}{12301745}$. For $\tau<|x|<K$, there holds
\begin{equation*}
 S(x)\ll X^{\frac{12195706}{12301745}+\eta}.
\end{equation*}
\end{lemma}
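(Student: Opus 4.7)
The plan is to reduce $S(x)$ to bilinear exponential sums via Heath--Brown's identity, then dispatch each resulting sum to one of Lemmas \ref{5-variables-Type-I-1}, \ref{5-variables-Type-I-2-1}, \ref{5-variables-Type-II}. First I would replace primes by the von Mangoldt function, writing
\[
S(x) = \sum_{X<n\leqslant 2X}\Lambda(n)e(n^c x)+O\big(X^{1/2}\mathscr{L}\big),
\]
where the error absorbs prime powers $p^k$ with $k\geqslant 2$. Then I would apply Lemma \ref{Heath-Brown-exponent-sum-fenjie} with parameters chosen near $U\asymp X^{212078/12301745}$, $Z\asymp X^{12089667/24603490}$, and $V$ pinched just below $Z$ but above $X^{1/3}$; this decomposes $S(x)$ into $O(\mathscr{L}^{10})$ sums, each either of Type I (with $L\gg Z$) or Type II (with $U\ll M\ll V$).

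For each Type I piece with $L\gg Z$ and $ML\asymp X$, I would split on whether $F\ll ML^2$ or $F\gg ML^2$. In the former case, $L\gg Z\asymp X^{12089667/24603490}$ places the sum inside the hypothesis of Lemma \ref{5-variables-Type-I-1}. In the latter, the complementary bound $M\ll X/L\ll X/Z\asymp X^{12513823/24603490}$ (using $12089667+12513823=24603490$) matches the hypothesis of Lemma \ref{5-variables-Type-I-2-1}. Either way the Type I piece is $\ll X^{12195706/12301745+\eta}$. For each Type II piece with $U\ll M\ll V$, the inequalities $U\asymp X^{212078/12301745}$ and $V<Z\asymp X^{12089667/24603490}<X^{28846271/49206980}$ keep $M$ inside the admissible window of Lemma \ref{5-variables-Type-II}, yielding the same bound. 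Summing all $O(\mathscr{L}^{10})$ pieces and absorbing the log factor into $X^\eta$ then gives the desired estimate.

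The main delicate point will be the parameter balancing in the Heath--Brown step. The numerical identity $2\cdot\frac{12089667}{24603490}+\frac{212078}{12301745}=1$ shows that $Z^2 U\asymp X$, placing the argument exactly at the boundary of the constraint $X\gg Z^2 U$ in Lemma \ref{Heath-Brown-exponent-sum-fenjie}; similarly, $V$ must be pinched just below $Z$ while still exceeding $X^{1/3}$ to satisfy $V^3\gg X$, and $Z\gg U^2$ must be checked (which is easy, since $U^2\asymp X^{0.035}$ is far below $Z$). A small $X^{-\eta}$ cushion on $U$ (or equivalently on $Z$) is therefore essential to render each Heath--Brown hypothesis strict, but the resulting loss is only a factor of $X^\eta$, which is harmless in the final exponent.
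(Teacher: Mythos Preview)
Your proposal is correct and follows essentially the same argument as the paper: reduce $S(x)$ to $\sum_{X<n\leqslant 2X}\Lambda(n)e(n^cx)$, apply Heath--Brown's decomposition (Lemma~\ref{Heath-Brown-exponent-sum-fenjie}) with the same $U$ and $Z$, then route Type~I sums to Lemma~\ref{5-variables-Type-I-1} or Lemma~\ref{5-variables-Type-I-2-1} according as $F\ll ML^2$ or $F\gg ML^2$, and Type~II sums to Lemma~\ref{5-variables-Type-II}. The only difference is cosmetic: the paper takes $V=X^{28846271/49206980}$, the full upper endpoint allowed by Lemma~\ref{5-variables-Type-II}, whereas you pinch $V$ just below $Z$ --- your choice respects the stated constraint $V<Z$ in Lemma~\ref{Heath-Brown-exponent-sum-fenjie} more literally, and your observation that $Z^2U\asymp X$ sits exactly at the boundary (hence the need for an $X^{-\eta}$ cushion, harmless in the final exponent) is well taken.
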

\begin{proof}
First, we have
\begin{equation}\label{S=U+error-5}
  S(x)=\mathscr{U}(x)+O(X^{1/2}),
\end{equation}
where
\begin{equation*}
  \mathscr{U}(x)=\sum_{X<n\leqslant2X}\Lambda(n)e(n^cx).
\end{equation*}
Taking $U=X^{\frac{212078}{12301745}},V=X^{\frac{28846271}{49206980}},Z=\big[X^{\frac{12089667}{24603490}}\big]+\frac{1}{2}$ in Lemma \ref{Heath-Brown-exponent-sum-fenjie},
it is easy to see that the sum
\begin{equation*}
 \sum_{X<n\leqslant2X}\Lambda(n)e(n^cx)
\end{equation*}
can be decomposed into $O(\log^{10}X)$ sums, each of which either of Type I:
\begin{equation*}
  S_{I}(M,L)=\sum_{M<m\leqslant2M}\sum_{L<\ell\leqslant2L}a(m)e(xm^c\ell^c)
\end{equation*}
with $L\gg Z, a(m)\ll m^\eta, ML\asymp X$, or of Type II:
\begin{equation*}
  S_{II}(M,L)=\sum_{M<m\leqslant2M}\sum_{L<\ell\leqslant2L}a(m)b(\ell)e(xm^c\ell^c)
\end{equation*}
with $U\ll M\ll V,a(m)\ll m^\eta,b(\ell)\ll \ell^\eta, ML\asymp X$. For the Type I sums, if $F\ll ML^2$, then from Lemma \ref{5-variables-Type-I-1}, we have $S_{I}(M,L)\ll X^{\frac{12195706}{12301745}+\eta}$;
if $F\gg ML^2$, then from Lemma \ref{5-variables-Type-I-2-1},
we have $S_{I}(M,L)\ll X^{\frac{12195706}{12301745}+\eta}$. For the Type II sums, from Lemma \ref{5-variables-Type-II},
we get  $S_{II}(M,L)\ll X^{\frac{12195706}{12301745}+\eta}$. Thus, we derive that
\begin{equation}\label{U-guji-5}
 \sum_{X<n\leqslant2X}\Lambda(n)e(n^cx)\ll X^{\frac{12195706}{12301745}+\eta}.
\end{equation}
By (\ref{S=U+error-5}) and (\ref{U-guji-5}), we complete the proof of Lemma \ref{S(x)-yuqujianguji-5}.  $\hfill$
\end{proof}

\section{Proof of Theorem \ref{Theorem-three-almost all} }
Throughout the proof of Theorem \ref{Theorem-three-almost all}, we set $X=(\frac{N}{3})^{\frac{1}{c}}$ and denote the function $\phi(y)$ and $\Phi(x)$ which are from Lemma \ref{xiaobei-lemma} with parameters $a=\frac{9\varepsilon}{10},b=\frac{\varepsilon}{10},r=[\log X]$.\\

In order to prove this theorem, first of all, we need to estimate the following integral
\begin{equation*}
  \int_\tau^K |S(x)|^4|\Phi(x)|\mathrm{d}x
\end{equation*}
under the condition $c>2$. We have
\begin{align}\label{S^4-Phi-1}
  & \,\, \int_\tau^K |S(x)|^4|\Phi(x)|\mathrm{d}x\ll \varepsilon \int_\tau^K |S(x)|^4\mathrm{d}x
                  \nonumber \\
  = & \,\, \varepsilon\sum_{X<p_1,\dots,p_4\leqslant2X}(\log p_1)\cdots(\log p_4)\int_\tau^Ke\big((p_1^c+p_2^c-p_3^c-p_4^c)x\big)\mathrm{d}x
                  \nonumber \\
  \ll & \,\, \varepsilon(\log X)^4\sum_{X<p_1,\dots,p_4\leqslant2X}\min\bigg(K,\frac{1}{|p_1^c+p_2^c-p_3^c-p_4^c|}\bigg)
                   \nonumber \\
  \ll & \,\, \sum_{X<n_1,\dots,n_4\leqslant2X}\min\bigg(K,\frac{1}{|n_1^c+n_2^c-n_3^c-n_4^c|}\bigg).
\end{align}
Set $u=n_1^c+n_2^c-n_3^c-n_4^c$, then by Lemma \ref{Robert-Sagos-lemma}, we know that the contribution of $K$ to (\ref{S^4-Phi-1}) is
\begin{align*}
  \ll & \,\, K\cdot\mathscr{A}(X;c,K^{-1})\ll K\big(K^{-1}X^{4-c}+X^2\big)X^{\eta} \\
  \ll & \,\,  \big(X^{4-c}+KX^2\big)X^{\eta} \ll X^{2+\eta}.
\end{align*}
By a splitting argument, the  contribution of $u$ with $|u|>K^{-1}$ to (\ref{S^4-Phi-1}) is
\begin{align*}
  \ll & \,\, (\log X)\max_{K^{-1}\ll U\ll X^c}\sum_{\substack{X<n_1,n_2,n_3,n_4\leqslant2X\\ U<|u|\leqslant2U}}\frac{1}{|u|}
                           \nonumber \\
  \ll & \,\, (\log X)\max_{K^{-1}\ll U\ll X^c} U^{-1}\cdot\mathscr{A}(X;c,2U)
                            \nonumber \\
  \ll & \,\,  (\log X)\max_{K^{-1}\ll U\ll X^c}U^{-1}\big(UX^{4-c}+X^2\big)X^{\eta}
                            \nonumber \\
  \ll & \,\,  \big(X^{4-c}+KX^2\big)X^{\eta} \ll X^{2+\eta}.
\end{align*}
Combining the above two cases, we deduce that, for $c>2$, there holds
\begin{equation}\label{S^4-Phi-upper}
  \int_\tau^K |S(x)|^4|\Phi(x)|\mathrm{d}x\ll X^{2+\eta}.
\end{equation}

Then, we consider the following integral
\begin{equation*}
\int_{\tau<|x|<K} |S(x)|^5|\Phi(x)|\mathrm{d}x.
\end{equation*}
We have
\begin{align*}
\int_{\tau<|x|<K} |S(x)|^5|\Phi(x)|\mathrm{d}x
= & \,\, \sum_{X<p\leqslant2X}(\log p)\int_{\tau<|x|<K}e(p^cx)\overline{S(x)}|S(x)|^3|\Phi(x)|\mathrm{d}x\\
\ll & \,\, (\log X)\sum_{X<n\leqslant2X}\Bigg|\int_{\tau<|x|<K}e(n^cx)\overline{S(x)}|S(x)|^3|\Phi(x)|\mathrm{d}x\Bigg|.
\end{align*}
From Cauchy's inequality, we obtain
\begin{align}\label{S^5-Cauchy-upper}
& \,\, \int_{\tau<|x|<K} |S(x)|^5|\Phi(x)|\mathrm{d}x\nonumber\\
\leqslant & \,\, X^{\frac{1}{2}}(\log X)\Bigg(\sum_{X<n\leqslant2X}\Bigg|\int_{\tau<|x|<K}e(n^cx)\overline{S(x)}|S(x)|^3|\Phi(x)|\mathrm{d}x\Bigg|^2\Bigg)^{\frac{1}{2}}\nonumber\\
= & \,\, X^{\frac{1}{2}}(\log X)\Bigg(\sum_{X<n\leqslant2X}\int_{\tau<|y|<K}\overline{e(n^cy)\overline{S(y)}|S(y)|^3|\Phi(y)|}\mathrm{d}y\nonumber\\
  & \,\, \qquad \qquad\qquad\qquad\qquad\qquad\times\int_{\tau<|x|<K}e(n^cx)\overline{S(x)}|S(x)|^3|\Phi(x)|\mathrm{d}x\Bigg)^{\frac{1}{2}}\nonumber\\
\leqslant & \,\, X^{\frac{1}{2}}(\log X)\Bigg(\int_{\tau<|y|<K}|S(y)|^4|\Phi(y)|\mathrm{d}y\int_{\tau<|x|<K}|S(x)|^4|\Phi(x)||\mathcal{T}(x-y)|\mathrm{d}x\Bigg)^{\frac{1}{2}}.
\end{align}
For the inner integral in (\ref{S^5-Cauchy-upper}), we have
\begin{align}\label{4-inner-integral-fenjie}
& \,\, \int_{\tau<|x|<K}|S(x)|^4|\Phi(x)||\mathcal{T}(x-y)|\mathrm{d}x\nonumber\\
\ll & \,\, \int_{\substack{\tau<|x|<K \\ |x-y|\leqslant X^{-c}}}|S(x)|^4|\Phi(x)||\mathcal{T}(x-y)|\mathrm{d}x+\int_{\substack{\tau<|x|<K \\ X^{-c}<|x-y|\leqslant 2K}}|S(x)|^4|\Phi(x)||\mathcal{T}(x-y)|\mathrm{d}x.
\end{align}
According to Lemma \ref{xiaobei-lemma}, Lemma \ref{S(x)-yuqujianguji-5} and the trivial estimate $\mathcal{T}(x-y)\ll X$, we get
\begin{align}\label{4-inner-integral-fenjie-1}
& \,\,\int_{\substack{\tau<|x|<K \\ |x-y|\leqslant X^{-c}}}|S(x)|^4|\Phi(x)||\mathcal{T}(x-y)|\mathrm{d}x \nonumber\\
\ll & \,\, \varepsilon X\times \sup_{\tau<|x|<K} |S(x)|^4\times
              \int_{\substack{\tau<|x|<K \\ |x-y|\leqslant X^{-c}}} \mathrm{d}x \nonumber \\
\ll & \,\, \varepsilon X\cdot X^{\frac{48782824}{12301745}-c}\ll \varepsilon X^{\frac{61084569}{12301745}-c}.
\end{align}
From Lemma \ref{yijie-exp-pair}, for $X^{-c}<|x|\leqslant2K$, we have
\begin{align}\label{T(x)-expo-pair-abs-5}
 \mathcal{T}(x) \ll & \,\, \big(|x|X^{c-1}\big)^\kappa X^\lambda+\frac{1}{|x|X^{c-1}}\nonumber \\
   \ll & \,\, |x|^\kappa X^{\kappa c+\lambda-\kappa}+\frac{1}{|x|X^{c-1}} .
\end{align}
Taking
\begin{align*}
  (\kappa,\lambda)= A BA^2    BA BA BA BA BA BA   BA^2 BA^2 BA^2 BA^2  B(0,1) = \bigg(\frac{156989}{1244758},\frac{875691}{1244758}\bigg)
\end{align*}
in (\ref{T(x)-expo-pair-abs-5}), we derive that, for $X^{-c}<|x-y|\leqslant2K$, there holds
\begin{align}\label{T(x)-expo-pair-explicit-4}
 \mathcal{T}(x-y)
   \ll & \,\, |x-y|^\frac{156989}{1244758} X^{\frac{156989}{1244758} c+\frac{359351}{622379}}
               +\frac{1}{|x-y|X^{c-1}}.
\end{align}
From (\ref{S^4-Phi-upper}), (\ref{T(x)-expo-pair-explicit-4}) and
Lemma \ref{S(x)-yuqujianguji-5}, we derive that
\begin{align}\label{4-inner-integral-fenjie-2}
 & \,\, \int_{\substack{\tau<|x|<K \\ X^{-c}<|x-y|\leqslant2K}}|S(x)|^4|\Phi(x)||\mathcal{T}(x-y)|\mathrm{d}x
                     \nonumber \\
 \ll & \,\, \int_{\substack{\tau<|x|<K \\ X^{-c}<|x-y|\leqslant2K}}|S(x)|^4|\Phi(x)|
              \bigg(|x-y|^\frac{156989}{1244758} X^{\frac{156989}{1244758} c+\frac{359351}{622379}}
               +\frac{1}{|x-y|X^{c-1}} \bigg)\mathrm{d}x
                     \nonumber \\
 \ll & \,\, X^{\frac{156989}{1244758} c+\frac{359351}{622379}+\eta}
            \int_{\tau<|x|<K}\!\!|S(x)|^4|\Phi(x)|\mathrm{d}x
                      \nonumber \\
    & \,\,  \qquad+\varepsilon X^{1-c}\times \sup_{\tau<|x|<K}|S(x)|^4\times \int_{\substack{\tau<|x|<K \\ X^{-c}<|x-y|\leqslant2K}}\frac{\mathrm{d}x}{|x-y|}
                        \nonumber \\
  \ll & \,\, X^{\frac{156989}{1244758} c+\frac{359351}{622379}+\eta}\cdot X^{2+\eta}
             +\varepsilon X^{1-c}\cdot X^{\frac{48782824}{12301745}+\eta}
                        \nonumber \\
 \ll & \,\, X^{\frac{156989}{1244758} c+\frac{1604109}{622379}+\eta}+
            \varepsilon X^{\frac{61084569}{12301745}-c+\eta} \ll \varepsilon X^{\frac{61084569}{12301745}-c+\eta}.
\end{align}
From (\ref{4-inner-integral-fenjie}), (\ref{4-inner-integral-fenjie-1}) and (\ref{4-inner-integral-fenjie-2}), we deduce that
\begin{equation*}
  \int_{\tau<|x|<K}|S(x)|^4|\Phi(x)||\mathcal{T}(x-y)|\mathrm{d}x\ll\varepsilon X^{\frac{61084569}{12301745}-c+\eta},
\end{equation*}
from which and (\ref{S^4-Phi-upper}), we get
\begin{align}\label{S^5-upper-result}
\int_{\tau<|x|<K} |S(x)|^5|\Phi(x)|\mathrm{d}x & \ll X^{\frac{1}{2}}(\log X)\bigg( X^{2+\eta} \cdot \varepsilon X^{\frac{61084569}{12301745}-c+\eta}\bigg)^{\frac{1}{2}}\nonumber\\
& \ll \varepsilon^{\frac{1}{2}}X^{\frac{48994902}{12301745}-\frac{c}{2}+\eta}.
\end{align}

Next, we estimate the integral
\begin{equation*}
\int_{\tau<|x|<K} |S(x)|^6|\Phi(x)|\mathrm{d}x.
\end{equation*}
Similarly, we have
\begin{align*}
\int_{\tau<|x|<K} |S(x)|^6|\Phi(x)|\mathrm{d}x
= & \,\, \sum_{X<p\leqslant2X}(\log p)\int_{\tau<|x|<K}e(p^cx)\overline{S(x)}|S(x)|^4|\Phi(x)|\mathrm{d}x\\
\ll & \,\, (\log X)\sum_{X<n\leqslant2X}\Bigg|\int_{\tau<|x|<K}e(n^cx)\overline{S(x)}|S(x)|^4|\Phi(x)|\mathrm{d}x\Bigg|.
\end{align*}
By Cauchy's inequality, we obtain
\begin{align}\label{S^6-Cauchy-upper}
& \,\, \int_{\tau<|x|<K} |S(x)|^6|\Phi(x)|\mathrm{d}x\nonumber\\
\leqslant & \,\, X^{\frac{1}{2}}(\log X)\Bigg(\sum_{X<n\leqslant2X}\Bigg|\int_{\tau<|x|<K}e(n^cx)\overline{S(x)}|S(x)|^4|\Phi(x)|\mathrm{d}x\Bigg|^2\Bigg)^{\frac{1}{2}}\nonumber\\
= & \,\, X^{\frac{1}{2}}(\log X)\Bigg(\sum_{X<n\leqslant2X}\int_{\tau<|y|<K}\overline{e(n^cy)\overline{S(y)}|S(y)|^4|\Phi(y)|}\mathrm{d}y\nonumber\\
  & \,\, \qquad \qquad\qquad\qquad\times\int_{\tau<|x|<K}e(n^cx)\overline{S(x)}|S(x)|^4|\Phi(x)|\mathrm{d}x\Bigg)^{\frac{1}{2}}\nonumber\\
\leqslant & \,\, X^{\frac{1}{2}}(\log X)\Bigg(\int_{\tau<|y|<K}|S(y)|^5|\Phi(y)|\mathrm{d}y\int_{\tau<|x|<K}|S(x)|^5|\Phi(x)||\mathcal{T}(x-y)|\mathrm{d}x\Bigg)^{\frac{1}{2}}.
\end{align}
For the inner integral in (\ref{S^6-Cauchy-upper}), we get
\begin{align}\label{5-inner-integral-fenjie}
& \,\, \int_{\tau<|x|<K}|S(x)|^5|\Phi(x)||\mathcal{T}(x-y)|\mathrm{d}x\nonumber\\
\ll & \,\, \int_{\substack{\tau<|x|<K \\ |x-y|\leqslant X^{-c}}}|S(x)|^5|\Phi(x)||\mathcal{T}(x-y)|\mathrm{d}x+\int_{\substack{\tau<|x|<K \\ X^{-c}<|x-y|\leqslant 2K}}|S(x)|^5|\Phi(x)||\mathcal{T}(x-y)|\mathrm{d}x.
\end{align}
By the trivial estimate, we have $\mathcal{T}(x-y)\ll X$, which combines with Lemma \ref{xiaobei-lemma} and Lemma \ref{S(x)-yuqujianguji-5} to obtain
\begin{align}\label{5-inner-integral-fenjie-1}
& \,\,\int_{\substack{\tau<|x|<K \\ |x-y|\leqslant X^{-c}}}|S(x)|^5|\Phi(x)||\mathcal{T}(x-y)|\mathrm{d}x \nonumber\\
\ll & \,\, \varepsilon X\times \sup_{\tau<|x|<K} |S(x)|^5\times
              \int_{\substack{\tau<|x|<K \\ |x-y|\leqslant X^{-c}}} \mathrm{d}x \nonumber \\
\ll & \,\, \varepsilon X\cdot X^{\frac{60978530}{12301745}-c}\ll \varepsilon X^{\frac{73280275}{12301745}-c}.
\end{align}
According to (\ref{T(x)-expo-pair-explicit-4}), (\ref{S^5-upper-result}) and Lemma \ref{S(x)-yuqujianguji-5}, we get
\begin{align}\label{5-inner-integral-fenjie-2}
 & \,\, \int_{\substack{\tau<|x|<K \\ X^{-c}<|x-y|\leqslant2K}}|S(x)|^5|\Phi(x)||\mathcal{T}(x-y)|\mathrm{d}x
                     \nonumber \\
 \ll & \,\, \int_{\substack{\tau<|x|<K \\ X^{-c}<|x-y|\leqslant2K}}|S(x)|^5|\Phi(x)|
              \bigg(|x-y|^\frac{156989}{1244758} X^{\frac{156989}{1244758} c+\frac{359351}{622379}}
               +\frac{1}{|x-y|X^{c-1}} \bigg)\mathrm{d}x
                     \nonumber \\
 \ll & \,\, X^{\frac{156989}{1244758} c+\frac{359351}{622379}+\eta}
            \int_{\tau<|x|<K}\!\!|S(x)|^5|\Phi(x)|\mathrm{d}x
                      \nonumber \\
    & \,\,  \qquad+\varepsilon X^{1-c}\times \sup_{\tau<|x|<K}|S(x)|^5\times \int_{\substack{\tau<|x|<K \\ X^{-c}<|x-y|\leqslant2K}}\frac{\mathrm{d}x}{|x-y|}
                        \nonumber \\
  \ll & \,\, X^{\frac{156989}{1244758} c+\frac{359351}{622379}+\eta}\cdot \varepsilon^{\frac{1}{2}}X^{\frac{48994902}{12301745}-\frac{c}{2}+\eta}
             +\varepsilon X^{1-c}\cdot X^{\frac{60978530}{12301745}+\eta}
                        \nonumber \\
 \ll & \,\, \varepsilon^{\frac{1}{2}}X^{\frac{34914042479353}{7656347751355}-\frac{465390}{1244758}c+\eta}+
            \varepsilon X^{\frac{73280275}{12301745}-c+\eta} \ll \varepsilon X^{\frac{73280275}{12301745}-c+\eta}.
\end{align}
From (\ref{5-inner-integral-fenjie}), (\ref{5-inner-integral-fenjie-1}) and (\ref{5-inner-integral-fenjie-2}), we deduce that
\begin{equation*}
  \int_{\tau<|x|<K}|S(x)|^5|\Phi(x)||\mathcal{T}(x-y)|\mathrm{d}x\ll\varepsilon X^{\frac{73280275}{12301745}-c+\eta},
\end{equation*}
from which and (\ref{S^5-upper-result}), we get
\begin{align}\label{S^6-upper-result}
\int_{\tau<|x|<K} |S(x)|^6|\Phi(x)|\mathrm{d}x & \ll X^{\frac{1}{2}}(\log X)\bigg( \varepsilon^{\frac{1}{2}}X^{\frac{48994902}{12301745}-\frac{c}{2}+\eta} \cdot \varepsilon X^{\frac{73280275}{12301745}-c+\eta}\bigg)^{\frac{1}{2}}\nonumber\\
& \ll \varepsilon^{\frac{3}{4}}X^{\frac{134576922}{24603490}-\frac{3}{4}c+\eta}.
\end{align}

Finally define
\begin{align*}
 & B_1(R)  = \int_{-\infty}^{+\infty}S^3(x)e(-Rx)\Phi(x)\mathrm{d}x,\qquad    D_1(R) = \int_{-\tau}^{+\tau}S^3(x)e(-Rx)\Phi(x)\mathrm{d}x,\\
 & D_2(R)  = \int_{\tau<|x|<K}S^3(x)e(-Rx)\Phi(x)\mathrm{d}x,\qquad
  D_3(R) = \int_{|x|\geqslant K}S^3(x)e(-Rx)\Phi(x)\mathrm{d}x,\\
 & H(R) = \int_{-\infty}^{+\infty}I^3(x)e(-Rx)\Phi(x)\mathrm{d}x,\qquad
  H_1(R) = \int_{-\tau}^{+\tau}I^3(x)e(-Rx)\Phi(x)\mathrm{d}x.
\end{align*}
We have
\begin{align}\label{|B_1-H|^2-fenjie}
& \,\,\int_{N}^{2N}|B_1(R)-H(R)|^2\mathrm{d}R \nonumber\\
= & \,\, \int_{N}^{2N}\big|D_1(R)-H_1(R)+D_2(R)+D_3(R)-\big(H(R)-H_1(R)\big)\big|^2\mathrm{d}R \nonumber\\
\ll & \,\, \int_{N}^{2N}|D_1(R)-H_1(R)|^2\mathrm{d}R+\int_{N}^{2N}|D_2(R)|^2\mathrm{d}R\nonumber\\
 &\qquad\qquad\qquad+\int_{N}^{2N}|D_3(R)|^2\mathrm{d}R+\int_{N}^{2N}|H(R)-H_1(R)|^2\mathrm{d}R.
\end{align}
According to (3.18) of Zhang and Li \cite{Zhang-Li-2018}, we have
\begin{equation}\label{|D_1-H_1|^2-upper}
\int_{N}^{2N}|D_1(R)-H_1(R)|^2\mathrm{d}R \ll \varepsilon^2N^{\frac{6}{c}-1}E^{\frac{2}{3}}\mathscr{L}^7.
\end{equation}
Take $\Omega_1=\{R: N<R\leqslant 2N\},\,\Omega_2=\{x: \tau<|x|<K\},\, \xi=S^3(x)\Phi(x),\,\omega(x,R)=e(Rx),\,c(R)=\overline{D_2(R)}$. Then from Lemma \ref{Laporta-lemma-2}, we obtain
\begin{align}\label{|D_2|^2-first}
\int_{N}^{2N}|D_2(R)|^2\mathrm{d}R & = \int_{\Omega_1}\overline{D_2(R)}\big\langle S^3(x)\Phi(x), e(Rx)\big\rangle_2\mathrm{d}R\nonumber\\
& \ll \bigg(\int_{\Omega_2}|S^3(x)\Phi(x)|^2\mathrm{d}x\bigg)^{\frac{1}{2}}
\bigg(\int_{\Omega_1}|\overline{D_2(R)}|^2\mathrm{d}R\bigg)^{\frac{1}{2}}\nonumber\\
& \qquad \qquad \times \bigg(\sup_{x'\in\Omega_1}\int_{\Omega_1}\bigg|\big\langle e(Rx), e(Rx')\big\rangle_2\bigg|\mathrm{d}x\bigg)^{\frac{1}{2}}.
\end{align}
According to (\ref{S^6-upper-result}), (\ref{|D_2|^2-first}) and Lemma \ref{Laporta-lemma-1}, we get
\begin{align}\label{|D_2|^2-upper}
\int_{N}^{2N}|D_2(R)|^2\mathrm{d}R & \ll A\int_{\tau<|x|<K}|S(x)|^6|\Phi(x)|^2\mathrm{d}x  \ll \varepsilon \mathscr{L} \int_{\tau<|x|<K}|S(x)|^6|\Phi(x)|\mathrm{d}x \nonumber\\
& \ll \varepsilon \mathscr{L} \cdot \varepsilon^{\frac{3}{4}}X^{\frac{134576922}{24603490}-\frac{3}{4}c+\eta} \ll \varepsilon^2N^{\frac{6}{c}-1}E^{\frac{2}{3}}\mathscr{L}^7.
\end{align}
By Lemma \ref{xiaobei-lemma}, we get
\begin{align}\label{|D_3|^2-upper}
\int_{N}^{2N}|D_3(R)|^2\mathrm{d}R & \ll \int_{N}^{2N}\Bigg|\int_{K}^{+\infty}|S(x)|^3|\Phi(x)|\mathrm{d}x\Bigg|^2\mathrm{d}R \nonumber\\
& \ll NX^6\Bigg|\int_{K}^{+\infty}\bigg(\frac{5r}{\pi x\varepsilon}\bigg)^r\frac{\mathrm{d}x}{x}\Bigg|^2 \nonumber\\
& \ll NX^6\bigg(\frac{5r}{\pi K\varepsilon}\bigg)^{2r} \ll N.
\end{align}
From Lemma \ref{xiaobei-lemma}, we derive that
\begin{align}\label{|H-H_1|^2-upper}
\int_{N}^{2N}|H(R)-H_1(R)|^2\mathrm{d}R & \ll \int_{N}^{2N}\bigg|\int_{|x|>\tau}|I(x)|^3|\Phi(x)|\mathrm{d}x\bigg|^2\mathrm{d}R \nonumber\\
& \ll \varepsilon^2NX^{6-6c}\bigg|\int_{\tau}^{+\infty}\frac{\mathrm{d}x}{x^3}\bigg|^2 \ll \varepsilon^2N^{\frac{2}{c}-1+\eta},
\end{align}
where we use the estimate
\begin{equation}\label{I(x)-upper}
   I(x)\ll \frac{1}{|x|X^{c-1}},
\end{equation}
which follows from Lemma 4.2 in Titchmarsh \cite{Titchmarsh-book}.

Combining (\ref{|B_1-H|^2-fenjie}), (\ref{|D_1-H_1|^2-upper}), (\ref{|D_2|^2-upper}), (\ref{|D_3|^2-upper}) and (\ref{|H-H_1|^2-upper}), we obtain
\begin{equation}\label{|B_1-H|^2-upper}
\int_{N}^{2N}|B_1(R)-H(R)|^2\mathrm{d}R \ll \varepsilon^2N^{\frac{6}{c}-1}E^{\frac{2}{3}}\mathscr{L}^7.
\end{equation}
This implies that for all $R\in(N,2N]\setminus \mathfrak{A}$ with $|\mathfrak{A}|= O(NE^\frac{2}{15})$, we have
\begin{equation}\label{B_1=H+O}
B_1(R) = H(R) + O\big(\varepsilon N^{\frac{3}{c}-1}E^\frac{1}{10}\big).
\end{equation}
Define
\begin{equation*}
B(R)=\sum_{\substack{X<p_1,p_2,p_3\leqslant 2X\\|p_1^c+p_2^c+p_3^c-R|<\varepsilon}}(\log p_1)(\log p_2)(\log p_3).
\end{equation*}
By the property of $\phi(y)$ in Lemma \ref{xiaobei-lemma} and the Fourier transformation formula, we have
\begin{align}\label{B_1-ll-B}
B_1(R) & =\sum_{X<p_1,p_2,p_3\leqslant 2X}(\log p_1)(\log p_2)(\log p_3)\int_{-\infty}^{+\infty}e\big((p_1^c+p_2^c+p_3^c-R)x\big)\Phi(x)\mathrm{d}x\nonumber\\
 & = \sum_{X<p_1,p_2,p_3\leqslant 2X}(\log p_1)(\log p_2)(\log p_3)\phi(p_1^c+p_2^c+p_3^c-R)\leqslant B(R).
\end{align}
Proceeding as in \cite{Tolev-1992}, Lemma 6, we get
\begin{equation}\label{H-lower}
H(R)\gg \varepsilon R^{\frac{3}{c}-1}.
\end{equation}
Hence, from (\ref{|B_1-H|^2-upper})-(\ref{H-lower}), for all $R\in(N,2N]\setminus \mathfrak{A}$ with $|\mathfrak{A}|= O(NE^\frac{2}{15})$, we have
\begin{equation*}
B(R)\geqslant B_1(R)\geqslant H(R)\gg \varepsilon R^{\frac{3}{c}-1},
\end{equation*}
which finishes the proof of Theorem \ref{Theorem-three-almost all}.

\section{Proof of Theorem \ref{Theorem-five-variables-inequality} }

In this section, we set $X=\frac{1}{2}(\frac{N}{5})^{\frac{1}{c}}$ and denote by $\phi(y)$ and $\Phi(x)$ the functions which appear in Lemma \ref{xiaobei-lemma} with parameter $a=\frac{9\varepsilon}{10},b=\frac{\varepsilon}{10},r=[\log X]$. Define
\begin{equation*}
  \mathscr{B}_6(N)=\sum_{\substack{X<p_1,p_2,p_3,p_4,p_5,p_6\leqslant2X \\ |p_1^c+\cdots+p_6^c-N|<\varepsilon }}(\log p_1)(\log p_2)\cdots(\log p_6).
\end{equation*}
By the property of $\phi(y)$, we have $\mathscr{B}_6(N)\geqslant \mathscr{C}_{6}(N)$, where
\begin{equation*}
  \mathscr{C}_6(N)=\sum_{X<p_1,p_2,p_3,p_4,p_5,p_6\leqslant2X }
  (\log p_1)\cdots(\log p_6)\phi(p_1^c+\cdots+p_6^c-N).
\end{equation*}
From the Fourier transformation formula, we derive that
\begin{align}\label{C_5(N)-fenie}
 \mathscr{C}_6(N)= & \,\, \sum_{X<p_1,\dots,p_6\leqslant2X}(\log p_1)\cdots(\log p_6)
                         \int_{-\infty}^{+\infty}e\big((p_1^c+\cdots+p_6^c-N)y\big)\Phi(y)\mathrm{d}y
                               \nonumber \\
 = & \,\,  \int_{-\infty}^{+\infty}S^6(x)\Phi(x)e(-Nx)\mathrm{d}x
                               \nonumber \\
 =& \,\, \bigg(\int_{|x|\leqslant\tau}+\int_{\tau<|x|<K}+\int_{|x|\geqslant K}\bigg)S^6(x)\Phi(x)e(-Nx)\mathrm{d}x
                                \nonumber \\
 =& \,\, \mathscr{C}_6^{(1)}(N)+\mathscr{C}_6^{(2)}(N)+\mathscr{C}_6^{(3)}(N),\quad \textrm{say}.
\end{align}

Define
\begin{align*}
  & \,\, \mathcal{H}(N)=\int_{-\infty}^{+\infty} I^6(x)\Phi(x)e(-Nx)\mathrm{d}x,  \\
  & \,\, \mathcal{H}_\tau(N)=\int_{-\tau}^{+\tau} I^6(x)\Phi(x)e(-Nx)\mathrm{d}x.
\end{align*}
From (\ref{I(x)-upper}) and Lemma \ref{xiaobei-lemma}, we derive that
\begin{align}\label{H_tau-H-5}
  \big|\mathcal{H}(N)-\mathcal{H}_\tau(N)\big|\ll \int_{\tau}^\infty|I(x)|^6|\Phi(x)|\mathrm{d}x
              \ll \varepsilon\int_{\tau}^{\infty}\bigg(\frac{1}{|x|X^{c-1}}\bigg)^6\mathrm{d}x\ll  \varepsilon X^{6-c-\eta},
\end{align}
By Lemma \ref{xiaobei-lemma},
Lemma \ref{S(x)-I(x)-fourth-power-lemma} and Lemma \ref{S(x)=I(x)+jieyu}, we deduce that
\begin{align}\label{C_5^(1)-H_tau}
   & \,\, \big|\mathscr{C}_6^{(1)}(N)-\mathcal{H}_\tau(N)\big|\leqslant
          \int_{-\tau}^{+\tau}\big|S^6(x)-I^6(x)\big|\big|\Phi(x)\big|\mathrm{d}x
                     \nonumber \\
   \ll & \,\, \varepsilon\cdot\int_{-\tau}^{+\tau}\big|S(x)-I(x)\big|\big(|S(x)|^5+|I(x)|^5\big)\mathrm{d}x
                     \nonumber \\
   \ll & \,\, \varepsilon X\cdot \sup_{|x|\leqslant \tau}\big|S(x)-I(x)\big|
              \bigg(\int_{-\tau}^{+\tau}|S(x)|^4\mathrm{d}x+\int_{-\tau}^{+\tau}|I(x)|^4\mathrm{d}x\bigg)
                     \nonumber \\
   \ll & \,\, \varepsilon X\cdot X\exp\big(-(\log X)^{1/5}\big)
              \bigg(\int_{-\tau}^{+\tau}|S(x)|^4\mathrm{d}x+\int_{-\tau}^{+\tau}|I(x)|^4\mathrm{d}x\bigg)
                     \nonumber \\
   \ll & \,\, \varepsilon X^{6-c}\exp\big(-(\log X)^{1/6}\big).
\end{align}
It follows from Lemma \ref{5-power-main-low-bound}, (\ref{H_tau-H-5}) and (\ref{C_5^(1)-H_tau}) that
\begin{equation}\label{C_5^(1)(N)-lower-bound}
  \mathscr{C}_6^{(1)}(N)=\big( \mathscr{C}_6^{(1)}(N)-\mathcal{H}_\tau(N)\big)+\big(\mathcal{H}_\tau(N)-\mathcal{H}(N)\big)+\mathcal{H}(N)
                       \gg \varepsilon  X^{6-c}.
\end{equation}
By the definition of $\mathscr{C}_6^{(2)}(N)$ and (\ref{S^6-upper-result}), we obtain
\begin{align}\label{C_5^(2)(N)-upper-bound}
  \big|\mathscr{C}_6^{(2)}(N)\big|=& \,\, \Bigg|\int_{\tau<|x|<K}S^6(x)\Phi(x)e(-Nx)\mathrm{d}x\Bigg|
  \ll  \,\, \int_{\tau<|x|<K} |S(x)|^6|\Phi(x)|\mathrm{d}x \nonumber\\
  \ll & \,\,\varepsilon^{\frac{3}{4}}X^{\frac{134576922}{24603490}-\frac{3}{4}c+\eta}
  \ll \,\,\varepsilon X^{6-c-\eta}.
\end{align}
By Lemma \ref{xiaobei-lemma}, we have
\begin{align}\label{C_5^(3)(N)-upper-bound}
        \big|\mathscr{C}_6^{(3)}(N)\big|\ll & \,\, \int_{K}^\infty |S(x)|^{6}|\Phi(x)|\mathrm{d}x
           \ll   X^6\int_{K}^\infty\frac{1}{\pi|x|}\bigg(\frac{r}{2\pi|x|b}\bigg)^r\mathrm{d}x
                     \nonumber \\
        \ll & \,\, X^6\bigg(\frac{r}{2\pi b}\bigg)^r\int_K^\infty\frac{\mathrm{d}x}{x^{r+1}}\ll \frac{X^6}{r}\bigg(\frac{r}{2\pi Kb}\bigg)^r
                      \nonumber \\
        \ll  & \,\,\frac{X^6}{\log X}\cdot\bigg(\frac{1}{2\pi\log^5X}\bigg)^{\log X}
                   \ll \frac{X^6}{X^{5\log\log X+\log(2\pi)}(\log X)}\ll 1.
\end{align}
From (\ref{C_5(N)-fenie}), (\ref{C_5^(1)(N)-lower-bound}), (\ref{C_5^(2)(N)-upper-bound}) and (\ref{C_5^(3)(N)-upper-bound}), we deduce that
\begin{equation*}
\mathscr{C}_6(N)=\mathscr{C}_6^{(1)}(N)+\mathscr{C}_6^{(2)}(N)+\mathscr{C}_6^{(3)}(N)\gg \varepsilon X^{6-c},
\end{equation*}
and thus
\begin{equation*}
\mathscr{B}_6(N)  \geqslant\mathscr{C}_6(N) \gg \varepsilon X^{6-c}\gg \frac{X^{6-c}}{\log^4X},
\end{equation*}
which finishes the proof of Theorem \ref{Theorem-five-variables-inequality}.

\section*{Acknowledgement}

The authors would like to express the most sincere gratitude to the referee for his/her patience in
refereeing this paper. This work is supported by the National Natural Science Foundation
of China (Grant No. 11901566, 11971476), the Fundamental Research Funds for the Central
Universities (Grant No. 2019QS02), and the Scientific Research Funds of Beijing Information Science and Technology
University (Grant No. 2025035).

\end{document}